\title[ ]{  Noncompact complete Riemannian manifolds with  singular continuous spectrum embedded into the
essential spectrum of the Laplacian, I. The hyperbolic case. }
\author{ Svetlana Jitomirskaya}
\address[ Svetlana Jitomirskaya]{ Department of Mathematics, University of California, Irvine, CA 92697-3875, USA}
\email{szhitomi@math.uci.edu}
\author{Wencai Liu}
\address[Wencai Liu]{Department of Mathematics, University of California, Irvine, CA 92697-3875, USA}\email{liuwencai1226@gmail.com}
\address{Current address: Department of Mathematics, Texas A\&M University, College Station, TX 77843-3368, USA}
\theoremstyle{plain}
\newtheorem{theorem}{Theorem}[section]
\newtheorem{lemma}[theorem]{Lemma}
\newcommand{\R}{\mathbb{R}}
\theoremstyle{definition}
\newtheorem{remark}[theorem]{Remark}
\begin{document}


\begin{abstract}
We construct Riemannian manifolds with singular continuous spectrum
 embedded in the absolutely continuous spectrum of the Laplacian. Our
 manifolds are  asymptotically
 hyperbolic with
 sharp curvature bounds.

\end{abstract}
\maketitle
\section{Introduction and main results}

Let $(M_n, g), \, n\geq 2,$ be an $n$-dimensional connected  noncompact
complete Riemannian manifold.
The Laplace-Beltrami operator  $\Delta:=\Delta_g$ on   $M:=(M_n,g)$, is essentially self-adjoint on $C^{\infty}_0(M_n)$.  We also denote by $\Delta$ its unique self-adjoint extension to
$L^2(M_n,dv_g)$. 

We refer the readers to \cite{donn} for a review of results on the
spectral theory of Laplacians on non-compact manifolds. Most of the
past work has been focused on proofs of the purity of absolutely
 continuous spectrum, guaranteed by the asymptotic curvature
 conditions, going back to
 \cite{pinsky1979spectrum,donnelly1990negativ}. Several extensions of
 purity results
 have also  appeared recently \cite{Liu1,Liu2,ito1,ito2}. Lately,
 some attention has turned to the opposite phenomenon.
 Kumura \cite{kum} constructed manifolds with an eigenvalue
 embedded in the spectrum of the Laplacian.
 In \cite{jl} we constructed 
manifolds with arbitrary finite or countable
 subset of the essential spectrum embedded as eigenvalues.
This brings a natural question whether singular
 continuous spectrum can also be embedded in the essential (absolutely
 continuous) spectrum of the
 Laplace-Beltrami operator. The
 goal of this paper is to construct such manifolds.
We prove

\begin{theorem}\label{Thm1}
For $K_0<0,$ there exist  smooth simply  connected $n$-dimensional   Riemannian manifolds
such that
\begin{enumerate}
   \item
   $\sigma_{{\rm ess}}(-\Delta)=\sigma_{{\rm ac}}(-\Delta)= \left[\frac{|K_0|}{4}(n-1)^2,\infty \right)$,
   \item $\sigma_{{\rm sc}}(-\Delta)\neq \emptyset.$\footnote{It is
       then automatically embedded in $\left[\frac{|K_0|}{4}(n-1)^2,\infty \right)$.}
\end{enumerate}
\end{theorem}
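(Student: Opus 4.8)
The plan is to realize $M$ as a rotationally symmetric model manifold $M=[0,\infty)_r\times\mathbb{S}^{n-1}$ with metric $g=dr^2+f(r)^2\,g_{\mathbb{S}^{n-1}}$, and to reduce the spectral analysis of $\Delta$ to a family of one-dimensional Schr\"odinger operators on the half-line. For $g$ to be a smooth metric on $\mathbb{R}^n$ (so that $M$ is smooth and simply connected) one needs $f\in C^\infty$, positive on $(0,\infty)$, extending to an odd function of $r$ with $f'(0)=1$; the radial and spherical sectional curvatures of $g$ are $-f''/f$ and $(1-(f')^2)/f^2$, so ``asymptotically hyperbolic with sharp curvature bounds'' will mean $f(r)=\tfrac{1}{\sqrt{|K_0|}}\sinh(\sqrt{|K_0|}\,r)\,(1+o(1))$, with matching asymptotics for $f'$ and $f''$ at a prescribed rate. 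Separating variables by spherical harmonics gives a unitary equivalence $-\Delta\cong\bigoplus_{\ell\ge0}\widetilde{H}_\ell$ (with multiplicities), and the substitution $u=f^{(n-1)/2}\phi$ turns each summand into $\widetilde{H}_\ell=-\frac{d^2}{dr^2}+V_\ell$ on $L^2(0,\infty)$, with the self-adjoint boundary condition at $0$ inherited from the essential self-adjointness of $\Delta$ on $C_0^\infty(M)$, where
\[ V_\ell(r)=\frac{n-1}{2}\,\frac{f''}{f}+\frac{(n-1)(n-3)}{4}\,\frac{(f')^2}{f^2}+\frac{\ell(\ell+n-2)}{f^2}. \]
The hyperbolic asymptotics of $f$ force $V_\ell(r)\to\lambda_0:=\frac{|K_0|}{4}(n-1)^2$ as $r\to\infty$ for every $\ell$.

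Statement (1) is then a soft consequence of mild decay. If the warping function is chosen so that $W_\ell:=V_\ell-\lambda_0\in L^2(0,\infty)$ for every $\ell$ — which holds as soon as $f-\tfrac{1}{\sqrt{|K_0|}}\sinh(\sqrt{|K_0|}\,\cdot\,)$ and its first two derivatives decay like $r^{-1}$, a rate we want anyway — then Weyl's theorem gives $\sigma_{\mathrm{ess}}(\widetilde{H}_\ell)=[\lambda_0,\infty)$, and the Deift--Killip theorem on $L^2$ potentials gives $\sigma_{\mathrm{ac}}(\widetilde{H}_\ell)=[\lambda_0,\infty)$ with the essential support of the absolutely continuous part equal to all of $[\lambda_0,\infty)$. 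Taking the orthogonal sum over $\ell$ yields $\sigma_{\mathrm{ess}}(-\Delta)=\sigma_{\mathrm{ac}}(-\Delta)=[\lambda_0,\infty)$ (and, if desired, one may also arrange $V_\ell\ge\lambda_0$ pointwise, ruling out spectrum below $\lambda_0$).

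For statement (2), note that since the absolutely continuous part already has full essential support, any singular continuous spectrum must be carried by a Lebesgue-null subset of $(\lambda_0,\infty)$; so the task is to build a single channel, say $\widetilde{H}_0$, whose spectral measure $\mu_0$ has a nonzero, non-atomic singular part supported on such a set. I would achieve this with a slowly decaying oscillatory potential of Wigner--von Neumann type: $W_0(r)=O(r^{-1})$ (hence $W_0\in L^2\setminus L^1$), vanishing near $r=0$, whose phase is designed so that $\mu_0$ carries positive singular continuous mass on a Cantor set $C\subset(\lambda_0,\infty)$ of zero Lebesgue measure. In Pr\"ufer / EFGP variables, such $W_0$ makes the Pr\"ufer amplitude $R(r,E)$ grow for energies $E$ in the resonance set, so that the corresponding solutions are neither bounded nor $L^2$ — which simultaneously keeps those energies out of the a.c.\ spectrum and prevents them from being eigenvalues — and Gilbert--Pearson subordinacy theory then identifies $\mu_0|_C$ as purely singular continuous and nonzero. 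Hence $\sigma_{\mathrm{sc}}(\widetilde{H}_0)\neq\emptyset$, and therefore $\sigma_{\mathrm{sc}}(-\Delta)\neq\emptyset$. This is exactly the mechanism by which \cite{jl} implants embedded eigenvalues, ``smeared'' from a single point to a Cantor set of energies; controlling the phases along $C$ while keeping $W_0\in L^2$ is the delicate combinatorial point here.

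The remaining, and I expect genuinely hard, step is to verify that the $W_0$ manufactured in this way really equals $V_0-\lambda_0$ for an admissible warping function. Writing $f=\tfrac{1}{\sqrt{|K_0|}}\sinh(\sqrt{|K_0|}\,r)\,e^{\psi(r)}$, the correspondence $V_0\leftrightarrow f$ becomes a first-order Riccati-type ODE for $\psi$ forced essentially by $W_0$; one must show it admits a global smooth solution with $\psi,\psi',\psi''\to0$ at the rate required for the curvature bounds, that $\psi$ extends evenly with $\psi(0)=0$ so that $g$ closes up smoothly at the origin (arranged by switching $W_0$ on only for large $r$), and that the resulting sectional curvatures $-f''/f$ and $(1-(f')^2)/f^2$ tend to $K_0$ with the claimed sharp bounds. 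The essential tension — and the reason the curvature bounds turn out sharp — is that embedded singular continuous spectrum forces $W_0$ to decay no faster than $L^2$ scale (any faster decay would land inside the a.c.\ purity theorems of Pinsky and Donnelly, destroying the singular spectrum), while the curvature bounds push $W_0$ to be as small as possible; the construction must be tuned to sit exactly on this threshold, and making the Pr\"ufer-variable estimates and the curvature estimates hold together is where I expect the bulk of the work to lie.
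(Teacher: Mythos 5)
Your overall framework (rotationally symmetric metric, separation of variables, the substitution $u=f^{(n-1)/2}\phi$, Weyl's theorem plus Deift--Killip/Kiselev-type a.c.\ results for statement (1), and a Riccati equation to recover the warping function from a prescribed 1D potential) is the same as the paper's. The genuine gap is in your mechanism for statement (2). First, your quantitative claim is impossible as stated: a potential with $W_0(r)=O(r^{-1})$ cannot produce embedded singular continuous spectrum --- Kiselev's sharpness theorem (the very result the paper is built on) says that $|V(x)|\leq C/(1+x)$ forces the singular continuous spectrum of $-D^2+V$ to be empty. This is why the paper's Theorem \ref{Thmmanifold} only asks for $|K_{\rm rad}-K_0|=O(h(r)/(1+r))$ with $h\to\infty$, and why the construction must sit strictly above the $1/r$ scale. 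Second, and more fundamentally, the step ``the Pr\"ufer amplitude grows for $E$ in a Cantor set $C$, so Gilbert--Pearson subordinacy identifies $\mu_0|_C$ as purely singular continuous \emph{and nonzero}'' does not follow: subordinacy theory classifies the measure on sets where it lives, but it gives no lower bound on $\mu_0(C)$. Since $C$ has zero Lebesgue measure, nothing in your argument prevents $\mu_0(C)=0$, in which case there is no singular continuous component at all. Producing positive singular mass on a null set embedded in the a.c.\ spectrum is exactly the hard content of Kiselev's construction (an iterative, piece-by-piece scheme that tracks spectral weight near approximate eigenvalues), and your proposal treats it as if it were automatic.

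The paper does not attempt to redo this; it imports Kiselev's theorem nearly as a black box (Theorem \ref{thmke}), and its real work lies elsewhere: (i) because the 1D operators inherit a Bessel-type singularity $(\nu^2-\tfrac14)/r^2$ at $r=0$ (the paper deliberately picks a spherical-harmonic channel with $\nu>1$, so the operator is limit point at $0$ --- note your choice $\ell=0$ is not limit point at $0$ in low dimensions), one needs the generalized eigenfunction expansion of Section 2 (Theorem \ref{Thmexp}) and the norming-constant formula of Lemma \ref{el}, which replaces Kiselev's Lemma 2.1 in his iteration; (ii) one must solve the Riccati-type equation \eqref{Solf1new} globally and prove $|f|+|f'|=O(h(r)/(1+r))$ (Lemma \ref{Lesolh}, via the comparison Lemma \ref{lecom1}), which is what converts the potential bound into the curvature bound. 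Your last paragraph correctly anticipates step (ii), but without repairing the two issues above --- the decay rate and, above all, the positivity of the singular mass --- the proof of $\sigma_{\rm sc}(-\Delta)\neq\emptyset$ is missing its central ingredient.
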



Despite a significant interest in the Schr\"odinger operators
community in the last 30 years and various ubiquity results
(initiated by \cite{wonderland}) singular continuous spectrum remains rather
mysterious in the spectral theory and has been virtually unseen and unstudied in
spectral geometry. In particular, to the best of our knowledge, there have been no previous
constructions of Riemannian manifolds with embedded singular
continuous spectrum of the Laplacian. The only appearance of the
singular continuos spectrum in the context of Laplace-Beltrami
operators we are aware of  is \cite{simongen} where Simon proved
topological genericity of manifolds with purely singular continuous
spectrum in a class of metrics on the $2d$ infinite cylinder (so not
simply connected, and
without an explicit construction).

Singular continuous spectral measures are supported on zero measure
sets, yet give zero weight to every point, making them particularly
difficult to control explicitly. Quite often singular continuous spectrum is
proved by ruling out existence of absolutely continuous and point
components (or turning the reasoning above on its head as in \cite{wonderland}). Clearly, this is not going to work for
singular continuous spectrum  {\it embedded} into absolutely
continuous one, making corresponding questions
especially hard.

In this paper we study the asymptotically hyperbolic case: Riemannian manifolds with
the radial curvature $K_{\rm rad}(r)$ (sectional curvature with one
fixed direction $\nabla r$) approaching $K_0<0$ as $r\to\infty.$ If
$K_{\rm rad}(r)=K_0<0$ is constant,  it is well known that $\sigma(-\Delta )=\sigma_{\rm
  ac}(-\Delta_g)=\sigma_{{\rm ess}}(-\Delta_g)=
\left[\frac{|K_0|}{4}(n-1)^2,\infty  \right)$  and the singular spectrum (the
union of point and singular continuous spectra) is empty. The
essential spectrum is preserved under decaying perturbations and it is
natural to expect that no embedded singular spectrum will persist when $K_{\rm rad}(r)$ approaches  $K_0$
sufficiently fast, but point and singular continuous spectra can  be
embedded   into the
essential  (absolutely continuous) spectrum for slower rates of decay
of  $|K_{\rm rad}(r)-K_0|.$ Note that compact perturbations of constant curvature can only
lead to eigenvalues below the essential spectrum, so embedding
questions are naturally tied to the rate of decay. Sharp decay thresholds have been established
for existence of metrics with an embedded eigenvalue \cite{kum} (see
also \cite{Liu1} for a simple proof of sharpness) and
with an embedded arbitrary countable (in particular, dense) set
\cite{jl} (also for the flat, i.e. $K_0=0,$ case). Here we prove a correspondingly  more precise
version of Theorem \ref{Thm1}

\begin{theorem}\label{Thmmanifold}
 Suppose $K_0< 0$. Let $h(r)>0$ be   any 
function  on  $(0,\infty)$  with $  \lim_{r\to \infty}h(r) =
\infty$. Then  there exist  smooth simply  connected  
Riemannian manifolds
 $(M_n,g)$
such that
\begin{enumerate}
\item   $ |K_{{\rm rad}}(r)-K_0|=  O\left(\frac{h(r)}{1+r}\right)$,
   \item
   $\sigma_{{\rm ess}}(-\Delta)=\sigma_{{\rm ac}}(-\Delta)= \left[\frac{|K_0|}{4}(n-1)^2,\infty \right)$,
   \item $\sigma_{{\rm sc}}(-\Delta)\neq \emptyset.$
\end{enumerate}
 \end{theorem}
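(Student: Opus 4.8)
The plan is to reduce the geometric problem to a one‑dimensional Schrödinger/Jacobi problem via the standard rotationally symmetric ansatz, and then to transplant a known construction of singular continuous spectrum embedded into absolutely continuous spectrum from the discrete or half‑line setting.

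\medskip

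\noindent\textbf{Step 1: Reduction to a radial ODE.} First I would take $M_n$ to be $\mathbb{R}^n$ equipped with a rotationally symmetric metric $g = dr^2 + f(r)^2\, d\theta^2$, where $d\theta^2$ is the round metric on $S^{n-1}$ and $f$ is smooth, positive on $(0,\infty)$, with $f(0)=0$, $f'(0)=1$, and all even derivatives vanishing at $0$ so that the metric extends smoothly across the origin. For such a metric, $-\Delta$ decomposes over spherical harmonics, and on the spherically symmetric sector it is unitarily equivalent (via $u \mapsto f^{(n-1)/2} u$) to the half‑line Schrödinger operator $-\frac{d^2}{dr^2} + Q(r)$ on $L^2(0,\infty)$ with a Dirichlet condition at $0$, where
\[
Q(r) = \frac{(n-1)(n-3)}{4}\,\frac{(f')^2}{f^2} + \frac{n-1}{2}\,\frac{f''}{f}.
\]
The radial curvature is $K_{\rm rad}(r) = -f''(r)/f(r)$. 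So requiring $K_{\rm rad}(r)\to K_0<0$ amounts, up to setting $a=\sqrt{|K_0|}$, to $f(r)\sim c\, e^{ar}$, and then $Q(r) \to \tfrac{(n-1)^2}{4}|K_0| =: E_0$; moreover the threshold of the a.c.\ spectrum is exactly $E_0$.

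\medskip

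\noindent\textbf{Step 2: Choosing $f$ via a potential with embedded singular continuous spectrum.} The strategy is to prescribe $Q$ rather than $f$: pick a potential of the form $Q(r) = E_0 + V(r)$, where $V$ is a Wigner–von Neumann type or sparse‑bump potential that decays slowly enough to produce singular continuous spectrum embedded in $(E_0,\infty)$, but fast enough that the a.c.\ spectrum fills $[E_0,\infty)$ and no spectrum appears below $E_0$. Concretely, I would use the classical construction (Naboko, Simon, Kiselev–Last–Simon, or the sparse‑barrier approach of Kiselev–Last–Simon / Remling) giving $-u'' + V u$ with $\sigma_{\rm ac} = [0,\infty)$, $\sigma_{\rm ess}=[0,\infty)$, and $\sigma_{\rm sc}\cap(0,\infty)\neq\emptyset$, then add $E_0$. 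The key point is that these constructions require only $|V(r)| = O(\psi(r)/(1+r))$ for a prescribed slowly growing $\psi\to\infty$ (e.g.\ the sparse‑bump potentials have bumps of fixed height on an increasingly sparse set, whose local contribution to the "decay rate" can be made $O(\psi(r)/(1+r))$ on average). Given such $V$, I then need to solve for $f$: the relation above is a second‑order ODE for $\log f$, or more transparently, writing $f = e^{a r}\, w(r)$ reduces to a Riccati/linear ODE for $w$ with $w\to c>0$. Solving it with the correct initial data near $r=0$ (smooth extension) and matching the hyperbolic asymptotics at $\infty$ gives a smooth $f$, and one checks $|K_{\rm rad}(r)-K_0| = |f''/f - a^2|$ is controlled by $|V(r)|$ plus lower‑order terms, hence $O(h(r)/(1+r))$ after identifying $\psi$ with a suitable multiple of $h$.

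\medskip

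\noindent\textbf{Step 3: Spectral conclusions and the higher angular momentum sectors.} Finally I would assemble the spectrum of $-\Delta$ from the orthogonal sum over spherical harmonics: the $\ell$‑th sector is $-\frac{d^2}{dr^2} + Q(r) + \frac{\ell(\ell+n-2)}{f(r)^2}$, and since $f(r)^{-2}\to 0$ exponentially, the extra term is a relatively compact (indeed exponentially decaying) perturbation, so every sector has a.c.\ spectrum exactly $[E_0,\infty)$ and essential spectrum $[E_0,\infty)$; thus $\sigma_{\rm ess}(-\Delta)=\sigma_{\rm ac}(-\Delta)=[E_0,\infty)$, giving (2). The singular continuous spectrum of the $\ell=0$ sector survives in $\sigma_{\rm sc}(-\Delta)$ because the decomposition is orthogonal and s.c.\ spectrum of one summand cannot be cancelled, giving (3); and (1) holds by construction. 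The main obstacle I anticipate is \emph{quantitative}: one must verify that the known one‑dimensional constructions can be realized with decay rate exactly at the $O(h(r)/(1+r))$ threshold for \emph{arbitrary} slowly growing $h$ — this requires carefully re‑running the sparse‑potential or Wigner–von Neumann analysis while tracking the precise decay, and then transferring that bound faithfully through the (nonlinear) change of variables $Q \leftrightarrow f$ to the curvature $K_{\rm rad}$. A secondary technical point is ensuring the smooth closing‑up of the metric at $r=0$ does not interfere with the asymptotic construction, which is handled by noting that the behavior near $r=0$ only affects the boundary condition of the half‑line operator and not its essential or a.c.\ spectrum, while the s.c.\ spectrum is a property of the potential at infinity, stable under changing the boundary condition.
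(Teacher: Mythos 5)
Your overall architecture (rotationally symmetric metric, separation of variables, prescribing the 1D potential and solving a Riccati-type equation backwards for the warping function, relative compactness of the centrifugal terms, Weyl-type arguments for the essential and a.c.\ spectrum) matches the paper. But there is a genuine gap at the crucial step, Step 3's claim that ``the s.c.\ spectrum is a property of the potential at infinity, stable under changing the boundary condition.'' This is false as a general principle, and it is exactly the difficulty the paper has to work around. By subordinacy theory, at each energy the singular part of the spectral measure is carried only if the particular solution selected by the left endpoint data is the subordinate one at infinity; by Aronszajn--Donoghue, the singular parts for different boundary data are mutually singular and live on disjoint energy sets. So if you take a Kiselev/Naboko/sparse potential constructed for a regular endpoint with a Dirichlet condition and then replace the operator near $r=0$ by the Bessel-type operator $\frac{\nu^2-1/4}{r^2}$ forced by smoothness of the metric at the origin, the embedded singular continuous component need not survive: it can lose all its mass or turn into dense point spectrum. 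Moreover, Kiselev's construction is adaptive rather than a fixed tail: it builds the potential piece by piece using quantitative spectral data of the operator actually being constructed (norming constants of embedded eigenvalues versus the spectral measure, his Lemma 2.1, and Pr\"ufer variables whose initial data at $r=1$ come from the left solution). This is why the paper first develops the Weyl--Titchmarsh generalized eigenfunction expansion for the Bessel-type operator (Theorem \ref{Thmexp}), proves the norming-constant identity Lemma \ref{el}, chooses a high angular momentum sector so that $\nu>1$ and the origin is limit point (avoiding any boundary-condition ambiguity, which your $\ell=0$ sector with an ad hoc Dirichlet condition does not), and then reruns Kiselev's construction with the modified Pr\"ufer variables \eqref{pruferam}--\eqref{pruferan} for the actual operator (Theorem \ref{thmke}). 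Your proposal, which uses the 1D constructions strictly as black boxes, does not yield statement (3) for the operator that actually arises from the manifold.

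Two secondary points you gloss over: the 1D potentials from these constructions are not smooth, so one must smooth them piece by piece while running the construction in order to get a smooth metric (handled inside Theorem \ref{thmke}); and for the a.c.\ spectrum to fill $\left[\frac{|K_0|}{4}(n-1)^2,\infty\right)$ via Christ--Kiselev/Kiselev one should first normalize $h$ (without loss of generality $h(r)\le 1+r^{1/10}$), since for rapidly growing $h$ the bound $|V-\tau^2|\le h(r)/(1+r)$ alone does not guarantee preservation of a.c.\ spectrum. Your Step 2 transfer of the decay bound from $V$ to the curvature is essentially the paper's Lemma \ref{Lesolh}, which requires the comparison argument of Lemma \ref{lecom1} to control the nonlinear (Riccati plus memory term $\lambda_i/f_1^2$) equation; your sketch points in the right direction there but the missing eigenfunction-expansion/adapted-construction step above is the essential one.
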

 \begin{remark}
 Modifying  our construction, the spectral measure of the Laplacian
 can have  both  pure point  and singular continuous components on $\left(\frac{|K_0|}{4}(n-1)^2,\infty\right) $.
 \end{remark}

We expect that our result is sharp, that is, like in the 1D case discussed below, $\frac{O(1)}{1+r}$ provides
threshold for existence of asymptotically hyperbolic metrics with
embedded singular continuous spectrum: for manifolds with
$|K_{\rm rad}(r)-K_0|<C(1+r)^{-1},$ the essential spectrum should be purely absolutely
continuous. So far it has been established under somewhat more restrictive
conditions. Kumura     \cite{kumura2013limiting} proved absolute
continuity of the  Laplacian by the limiting absorption principle ( originally from Agmon's theory \cite{agmon1975})
under the condition $|K_{\rm
  rad}(r)-K_0|=\frac{O(1)}{r^{1+\delta}}, \delta>0$  and assuming convexity of the Hessian of $r$.
Donnelly  used  exhaustion function to investigate the spectral
structure of the Laplacian,
which can also show the absence of singular continuous spectrum for some manifolds \cite{donnelly1999,donnelly1997exhaustion}.

There is a remarkable similarity between results on curvature  thresholds
for embedded eigenvalues for the non-compact manifolds in arbitrary
dimension  and for 1D Schr\"odinger operators
with decaying potentials. This leads to a natural conjecture that the
curvature threshold for existence of metrics with embedded singular
continuous spectrum is also  going to be the same as in the 1D Schr\"odinger
case, where this was a known difficult problem, popularized by B. Simon in the 90s and
included in his list of 15 Schr\"odinger  operator problems for the
XXI century \cite{XXI}. Unlike for the manifolds, for Schr\"odinger
operators, existence of {\it some} potentials with prescribed spectral
behavior is guaranteed by the inverse spectral theory \cite{levitan,marchenko}, so the issue is potentials with certain decay. Existence
of $L^2$ potentials with embedded singular continuous spectrum was
proved by Denisov \cite{den} and followed from Killip-Simon's
criterion \cite{ks} but in an implicit way. Potentials with power
decaying solutions on a set of expected Hausdorff dimension
\cite{ck,rem1} were constructed by Remling \cite{remduke,krsharp}, but this
was insufficient to infer existence of embedded singular continuous
component. Decaying potentials with purely
singular continuous spectrum were constructed in \cite{rem2,kls}.
 An explicit construction of potential that has singular continuous
 spectrum {\it embedded} into absolutely continuous
(and a sharp result in terms of decay) was given by Kiselev
\cite{kiselev2005imbedded}, therefore solving Simon's problem. He proved  that if the potential $V(x)=\frac{O(1)}{1+x}$, then the singular continuous spectrum  of $-D^2+V$ is empty,
but given any positive function $h(x) $ tending to infinity as $x$ grows, there exist potentials $V(x)$ such that $|V(x)|\leq \frac{h(x)}{1+x}$ and the operator $-D^2+V$ has a non-empty singular continuous spectrum on $(0,\infty)$\cite{kiselev2005imbedded}.
By Weyl theorem and classical results in  \cite{christ1998absolutely,remling1998absolutely,deift1999absolutely},
both the essential spectrum and absolutely continuous spectrum of
$-D^2+V$ constructed by Kiselev are $(0,\infty)$.

In this paper, we use Kiselev's potentials to construct our
manifolds. The Riemannian manifolds $(M,g)$ we construct  are rotationally
symmetric, and we effectively reduce the problem to a one-dimensional
Schr\"odinger operator, with the main work needed to guarantee the
existence of smooth metrics leading to a 1D potential with desired
properties. It turns out this is possible to do in the asymptotically
hyperbolic case, using Kiselev's construction almost as a black box. The asymptotically flat case (i.e. $K_0=0$) however turns out to be
more difficult, with corresponding problem unsolvable without further
assumptions on the potential, thus requiring to significantly modify Kiselev's
construction to guarantee the additional desired structure of the
potential. This will be done in \cite{jl2}.

To construct a rotationally symmetric manifold, we fix some $O\in M_n$
as the origin.  Using the radial coordinates (from
$O$) we construct  Riemannian manifold with the structure of the form
$$(M_n,g) =
\bigl( {  \R}^n, dr^2 + f_1^2(r) g_{S^{n-1}(1)} \bigr),$$
 where $
g_{S^{n-1}(1)} $ is the standard Riemannian metric on the unit
sphere, and we need to construct $f_1$ so that the Laplacian has the
desired properties.
To determine the
spectral representation of the Laplacian on a rotationally symmetric
manifolds, one can use separation of variables .

 Let $Y_{i,j}(\theta)$,  $\theta\in S^{n-1}$,  $i\geq 0$ and  $j=1,2,\cdots,q_i$, be the  spherical harmonics. They   form a complete orthonormal
basis for $L^2(S^{n-1})$ \cite{Modern}.
Each $Y_{i,j}(\theta)$ belongs to a $q_i$ dimensional eigenspace of the
spherical Laplacian with corresponding eigenvalue $\lambda_i$. One may expand $ \phi(r,\theta)\in L^2(M_n,g)$ as
\begin{equation*}
    \phi(r,\theta)=\sum_{i=0}^{\infty}\sum_{j=1}^{q_i}\phi_{i,j}(r)Y_{i,j}(\theta).
\end{equation*}
A computation  gives
\begin{equation*}
    -\Delta \phi=\sum_{i=0}^{\infty}\sum_{j=1}^{q_i} (-\Delta_i)\phi_{i,j}(r)Y_{i,j}(\theta),
\end{equation*}
where $-\Delta_i$ is defined on $L^2(\R^+,f_1^{n-1}dr)$, by
\begin{align}\label{0lap1}
    - \Delta_i    v
    = -\left\{ \frac{\partial ^2}{\partial r^2}
    + (n-1) \frac{f_1^\prime(r)}{f_1(r)} \frac{\partial }{\partial r} \right\} v+\frac{\lambda_i}{f_1^2} v.
\end{align}
Notice that   $v(r)$ is a function on $M$ only depending on the radius $r$.
Thus $\Delta$ is decomposed into a
direct sum of one-dimensional  operators  $\Delta_i$ with multiplicity $q_i$.

We now renormalize the measure to Lebesgue.
Let $U(v)=f_1^{\frac{n-1}{2}} v$ and  $$ L_i=U(-\Delta_i) U^{-1}.$$
$U$ is clearly unitary, making
$ -\Delta_i$ on $L^2((0,\infty),f_1^{n-1}dr)$   unitarily equivalent to operator  $L_i$ on $L^2((0,\infty), dr)$. Straightforward calculations give
\begin{equation}\label{Gop}
     L_iu=-   D^2u +V_iu
\end{equation}
where
\begin{equation}\label{Gp}
   D^2u=u^{\prime\prime}, V_i =\frac{(n-1)(n-3)}{4}\left(\frac{f_1^{\prime}}{f_1}\right)^2+\frac{n-1}{2}\frac{f_1^{\prime\prime}}{f_1}+\frac{\lambda_i}{f_1^2}.
\end{equation}

The proof now almost reduces to showing the existence of a singular
continuous component for some $L_i$, which is
a one-dimensional problem.
However,  in order to make the  manifold smooth in the neighborhood of $O$,  $ f_1^{\text{even}}$ must vanish at $0$ and one must have $f_1^{\prime}(0)\neq 0 $.
This makes  $ \frac{f_1^{\prime}}{f_1}(r)$  and $V(r)$
singular at the point $r=0$, so we need to deal with one-dimensional
Schr\"odinger operator (\ref{0lap1}) or \eqref{Gop} with singularities
at both $0$ and $\infty$.

It is well known that we have
\begin{equation}\label{Gk}
     K_{\rm rad}(r)    =   -\frac{f_1''(r)}{f_1(r)}.
\end{equation}

 Our goal therefore is to construct
$f_1(r)$ such that
the one-dimensional Schr\"odinger operator given by  \eqref{Gop} has  non-empty singular continuous spectrum, and
the radial curvature \eqref{Gk} eventually satisfies
\begin{equation}\label{Gk1}
    |K_{\rm rad}(r)-K_0|\leq \frac{h(r)}{1+r}.
\end{equation}

Here is   the sketch of our construction.

In the neighborhood of $O$( i.e., $r=0$), we use  Euclidean metric. Then the Schr\"odinger  operator \eqref{Gop} is limit point at the left  singular point $r=0$.
For the Euclidean space, the spectral analysis can proceed by  the
generalized eigen-expansion, which is well known for the Hankel transformation (Bessel type functions).
Our first step is to obtain similar results by  the generalized eigen-expansion for
$-D+V$ where $V$ is generated by  the  Euclidean  metrics  only for small
values of $r$.

For large $r$ (neighborhood of $r=\infty$), we will adapt Kiselev's
construction \cite{kiselev2005imbedded}, which originally  was done for a
Schr\"odinger operator without a singular point at $r=0$.
There are two difficulties  here. First,   we need to construct $f_1$
such that the 1D potential given by \eqref{Gp} is what one gets from the Kiselev's construction
and the radial curvature given by \eqref{Gk} satisfies
\eqref{Gk1}. It is this step that becomes impossible in the
asymptotically flat case without further requirements on the 1D
potential. Second, $f_1$ constructed here for large $r$ should ``match"  $f_1$ in the neighborhood
of $r=0$  so that we can use the generalized eigen-expansion to complete the spectral analysis.

The rest of the paper is organized as follows:
In \S 2, we  set up the spectral analysis of Bessel type potentials.
In \S 3, we  give all the remaining technical preparations.
In \S 4,  we complete the proof of  Theorem \ref{Thmmanifold}.

 \section{Spectral analysis of Bessel type potentials}

 As   mentioned in the introduction,  for $r<1$,  we define the metric
 to be Euclidean, that is
 $f_1=r$.
 Thus, for $r<1,$ the potential  $V_i$  given by \eqref{Gp} is
 \begin{equation}\label{Pbessel}
    V_i(r)=\left(\frac{(n-1)(n-3)}{4}+\lambda_i\right)\frac{1}{r^2},
 \end{equation}

 By the fact that $\lambda_i\to \infty$, we can choose
some $i$ so that
 \begin{equation}
    \frac{(n-1)(n-3)}{4}+\lambda_i\geq 1,
 \end{equation}
 In the followin,g we fix such $\lambda_i$ and
 let
 \begin{equation}\label{Gmay212}
    \nu^2= \frac{(n-1)(n-3)}{4}+\lambda_i+\frac{1}{4}
 \end{equation}
 so that $\nu>1$.
 Now we only consider the operator $L_i$  on  $L^2(\R^+,dr)$. We omit the dependence on $i$ for simplicity.

Thus we have
 \begin{equation}\label{Gop0}
     L u=-   D^2u +Vu
\end{equation}
and by \eqref{Pbessel}
\begin{equation}\label{Gpbessel}
    V(r) =\frac{\nu^2-\frac{1}{4}}{r^2},\text{ for } r<1.
\end{equation}
 Assume $V\in C^{\infty}[0,\infty)$ and there is some constant $a\geq 0$ such that
 \begin{equation}\label{Gcons}
  V-a\in L^2[1,\infty).
 \end{equation}
Since  $-D^2+V$ is  unitarily equivalent to a component of  $-\Delta$
on a non-compact  manifold,   it is non-negative and $0$ is not an eigenvalue.

Assumption (\ref{Gcons}) will be easily satisfied by our construction.
 Actually, we will prove $|V(r)-\frac{|K_0|}{4}(n-1)^2|\leq
 \frac{h(r)}{1+r},$ therefore $a=\frac{|K_0|}{4}(n-1)^2$ works.

 In this section, we will set up a generalized eigenfunction expansion for Schr\"odinger operator \eqref{Gop0}.
   $L$ given by \eqref{Gop0} is a Bessel differential operator for $r<1$.
   $V$ has two singular points: $r=0$ and $r=\infty$.  Since  $\nu>1$ by \cite[Theorems X.10]{Simmp2}, $L$ is in the limit point case at $0$, and since  $V-a\in L^2[1,\infty)$, by \cite[Theorems X.28]{Simmp2}  $L$ is in the limit point case at $\infty$. So by Weyl's criterion,   $L$ is  essentially self-adjoint  on $C_0^{\infty}(0,\infty)$.

  Let us consider the eigen-equation
   \begin{equation}\label{equ1}
    Lu=z u,
   \end{equation}
   with $z\in \mathbb{C}$ and $z\neq 0$.
Let $u(r)=\sqrt{r}y(r)$. \eqref{equ1} becomes
 \begin{equation}\label{equ1may101}
    y^{\prime\prime}(r)+\frac{y^{\prime}(r)}{r}+(z-\frac{\nu^2}{r^2})y(r)=0.
   \end{equation}
   Let $\sqrt{z}r=x$.
   \eqref{equ1may101} becomes
 \begin{equation}\label{equ1may102}
    y^{\prime\prime}(x)+\frac{y^{\prime}(x)}{x}+(1-\frac{\nu^2}{x^2})y(x)=0.
   \end{equation}
   \eqref{equ1may102} is a standard Bessel equation and it has a solution $y(x)=J_{\nu}(x)$ (see e.g.  Chapter 17 in \cite{Modern}),
   where
   \begin{equation*}
    J_{\nu}(x)=\sum_{n=0}^{\infty}\frac{(-1)^n}{\Gamma(n+1)\Gamma(\nu+n+1)}\left(\frac{x}{2}\right)^{2n+\nu}.
   \end{equation*}
  Thus Bessel differential  equation \eqref{equ1}  has then a solution
   \begin{equation*}
    u(r)=\sqrt{r}J_{\nu}(\sqrt{z}r)
   \end{equation*}
   for $r<1$.
   It is easy to see that  $u\in L^2((0,1))$ and  since $L$ is in the limit point  case at $0$, it is unique  up to a normalization constant.

   Now we extend the solution $u$   to $r\geq1$ with  $u$ still solving \eqref{equ1}.
   For convenience, denote
   \begin{equation}\label{Gaug1}
   \tilde{J}_{\nu}(r,z):=u(r).
   \end{equation}
   We emphasise that
   \begin{equation}\label{Gaug2}
   \tilde{J}_{\nu}(r,z)= \sqrt{r}{J}_{\nu}(\sqrt{z}r)
   \end{equation}
   for $r<1$.
  Thus  $ \tilde{J}_{\nu}(z,r)$ is the   unique eigen-solution of \eqref{equ1}, such that
   $\tilde{J}_{\nu}(z,r)\in L^2((0,1)) $. Notice that  $ \tilde{J}_{\nu}(z,r) $ may be not in
   $L^2([1,\infty))$.

   Our main result in this section is
   \begin{theorem}\label{Thmexp}
   Suppose $V$ satisfies \eqref{Gpbessel} and \eqref{Gcons}. Assume $-D^2+V$ is a non-negative operator and $0$ is not an eigenvalue.
   Then there  exists a monotone   measurable function $\rho(\lambda)$ on $\mathbb{R}^+$ of locally bounded variation on $(0,\infty)$ such that the following statements hold,
   \begin{itemize}
   \item[I:] for any $f\in L^2(\R^+,dr)$ there exists a unique $\hat{f}\in L^2(\R^+,d\rho) $ such that
   \begin{equation*}
    \hat{f}(\lambda)=\int_{\mathbb{R}^+} f(r) \tilde{J}_{\nu}(r,\lambda) dr.
   \end{equation*}
   Conversely, for any $g\in   L^2(\R^+,d\rho)$, there exists a unique  $f\in L^2(\R^+,dr)$ such that $g=\hat{f}.$
   \item[II:] for any $f_1,f_2\in L^2(\R^+,dr)$, we have
   \begin{equation*}
    \int_{\mathbb{R}^+}f_1f_2 dr =\int_{\mathbb{R}^+}\hat{f}_1\hat{f}_2 d\rho.
   \end{equation*}
   \item[III:] for any $f \in L^2(\R^+,dr)$, let $g=\hat{f}$. Then  we have
   \begin{equation*}
    f(r) =\int_{\mathbb{R}^+} g(\lambda)  \tilde{J}_{\nu}(r,\lambda) d\rho(\lambda).
   \end{equation*}
   \item[IV:] Define the unitary operator  $U$ from $L^2(\R^+,dr)$ to $L^2(\R^+,d\rho)$ by
   \begin{equation*}
    Uf=\hat{f}
   \end{equation*}
   which is called the generalized Fourier transform.  Then we have
   $\hat{L}=ULU^{-1}$ is the multiplication operator on $L^2(\R^+,d\rho)$, that is,
    $$  \mathfrak{{D}}(\hat{L})=\{g:g(\lambda),\lambda g(\lambda) \in L^2(\R^+,d\rho) \}$$
   and
   \begin{equation*}
    (\hat{L} g)(\lambda) =\lambda g(\lambda)
   \end{equation*}
   for $g\in \mathfrak{D}(\hat{L})$.

   \end{itemize}
   \end{theorem}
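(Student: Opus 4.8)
The plan is to establish Theorem \ref{Thmexp} as an instance of the classical Weyl--Titchmarsh--Kodaira eigenfunction expansion theory for a singular Sturm--Liouville operator that is in the limit point case at both endpoints, the novelty being only the identification of the ``good'' solution $\tilde J_\nu(r,z)$ built from the Bessel function near $r=0$. Since $L$ is limit point at $0$ and at $\infty$ (as already noted via \cite{Simmp2}, using $\nu>1$ and $V-a\in L^2[1,\infty)$), the general theory (e.g. Coddington--Levinson, or the treatment in Dunford--Schwartz, or Weyl--Kodaira as in \cite{Simmp2}) guarantees the existence of a single spectral function $\rho$ and a unitary generalized Fourier transform onto $L^2(\R^+,d\rho)$ diagonalizing $L$, once one fixes, for each nonreal $z$, the unique (up to normalization) solution lying in $L^2$ near $0$. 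So the real content is: (i) verify that $\tilde J_\nu(\cdot,z)$ is that $L^2$-near-$0$ solution, with a normalization that is entire in $z$ and real for real $z$; (ii) show the spectral function lives on $[0,\infty)$; (iii) transcribe the abstract conclusion into statements I--IV.

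First I would treat the endpoint $r=0$. For $r<1$ the equation $Lu=zu$ is exactly the Bessel equation after the substitutions $u=\sqrt r\,y$, $x=\sqrt z\,r$, so the two-dimensional solution space is spanned by $\sqrt r J_\nu(\sqrt z r)$ and $\sqrt r J_{-\nu}(\sqrt z r)$ (or $\sqrt r Y_\nu$); from the series for $J_{\pm\nu}$ one reads off $\sqrt r J_\nu(\sqrt z r)\sim r^{\nu+1/2}$ and the second solution $\sim r^{1/2-\nu}$ near $0$, and since $\nu>1$ only the first is in $L^2(0,1)$. Thus $\tilde J_\nu(r,z)$, defined by extending this solution to all of $(0,\infty)$ via $Lu=zu$, is, up to scalar, the Weyl solution at $0$; crucially, because $J_\nu(\sqrt z r)/z^{\nu/2}$ is an entire function of $z$ and the extension to $r\ge 1$ depends analytically on initial data at $r=1$, a suitable normalization of $\tilde J_\nu(r,z)$ is entire in $z$ for each fixed $r$ and real-valued for $z\in\R$. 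Then I would invoke the Weyl--Kodaira expansion theorem: there is a nondecreasing $\rho$ of locally bounded variation so that $Uf(\lambda)=\int_0^\infty f(r)\tilde J_\nu(r,\lambda)\,dr$ (a priori defined for compactly supported $f$ and extended by the resulting Parseval identity) is unitary from $L^2(\R^+,dr)$ onto $L^2(\R^+,d\rho)$, with inverse given by the $\tilde J_\nu$-integral against $d\rho$, and $ULU^{-1}$ is multiplication by $\lambda$. This yields I--IV essentially verbatim.

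Finally I would pin down the support: since $-D^2+V$ is assumed non-negative, $\sigma(L)\subset[0,\infty)$, so $\rho$ is constant on $(-\infty,0)$ and we may regard $\rho$ as a function on $\R^+$; and since $0$ is assumed not to be an eigenvalue, $\rho$ has no atom at $0$, so there is no loss in integrating over $\R^+=(0,\infty)$ rather than $[0,\infty)$ in I--IV. The main obstacle, or rather the point requiring the most care, is the interchange of the ``canonical'' Weyl--Kodaira normalization of the singular endpoint solution (usually fixed by a Wronskian condition) with the explicit Bessel normalization $\sqrt r J_\nu(\sqrt z r)$ used here, and checking that this preserves the analyticity/reality needed for the standard machinery to apply — equivalently, verifying that $\tilde J_\nu(r,\lambda)$ for $\lambda\ge0$ is, up to a fixed positive scalar, the real Weyl solution at $0$, so that the limiting-absorption construction of $\rho$ goes through unchanged. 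Everything else is an application of the classical theory quoted from \cite{Simmp2,Modern}.
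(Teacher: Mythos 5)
Your overall strategy --- reduce to the classical doubly-singular Weyl--Titchmarsh--Kodaira expansion with $\tilde{J}_{\nu}(\cdot,z)$ as the $L^2$-near-$0$ solution --- is the same as the paper's, but you assume away the two places where the actual work lies. First, for a problem that is limit point at both $0$ and $\infty$, the classical expansion theorems you invoke (Coddington--Levinson, Dunford--Schwartz, Titchmarsh Ch.~3) produce a $2\times 2$ matrix spectral measure $(\rho_{jk})$ built from both Weyl functions $M_{\pm}$, not a scalar one; the existence of a single spectral function attached to the endpoint-$0$ solution is exactly what has to be proved, not what the general theory hands you. The paper obtains it from Titchmarsh's formula 3.1.12 (Theorem \ref{ThmTchexp}), whose hypothesis $\Im M_{-}(\lambda)\to 0$ must be verified; this is done by the explicit computation $M_{-}(z)=\tfrac{1}{2}+\sqrt{z}\,J_{\nu}'(\sqrt{z})/J_{\nu}(\sqrt{z})$, which extends to real $\lambda\geq 0$ away from the zeros of $J_{\nu}(\sqrt{\lambda})$ and is real there. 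Citing ``the general theory'' for a scalar $\rho$ skips precisely this reduction.

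Second, your assertion that $\tilde{J}_{\nu}(r,\lambda)$ agrees with the Wronskian-normalized Weyl solution ``up to a fixed positive scalar'' is not correct: one has $\psi_1(r,z)=\theta(r,z)+M_{-}(z)\phi(r,z)=\tilde{J}_{\nu}(r,z)/J_{\nu}(\sqrt{z})$, so the normalizing factor is $\lambda$-dependent and vanishes at the zeros of $J_{\nu}(\sqrt{\lambda})$. Consequently $d\rho(\lambda)=d\rho_{11}(\lambda)/J_{\nu}^2(\sqrt{\lambda})$, and the assertion in the theorem that $\rho$ has locally bounded variation on $(0,\infty)$ is not automatic near those zeros; the paper needs the additional argument $d\rho=d\rho_{22}/(\tilde{J}_{\nu}'(1,\lambda))^2$ together with $\tilde{J}_{\nu}'(1,\lambda_0)\neq 0$ at such $\lambda_0$ and the finiteness of the zero set in compact intervals. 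Your proposal never addresses this point; under the ``fixed scalar'' reading the bounded-variation claim (and the behavior of the transform near these $\lambda$) would look trivial when it is not. You do flag the normalization question as ``the point requiring the most care,'' but flagging it is not resolving it --- the resolution is exactly the Bessel computation of $M_{-}$ and the $\rho_{22}$ argument above, which is the substantive content of the paper's proof.
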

 The proof is based on Titchmarsh expansion techniques  in
 \cite{titschmarch1958eigenfunction}. While they are rather standard,
 full details are needed to prove Theorem \ref{Thmexp}
in its full strength, so we list them here.
 We go over the classical Weyl theory  first.
 Suppose differential operator $T=-D^2+q$ on $L^2(\R^+)$ is in the limit point case on both sides $ 0$ and $\infty$. Thus $T$ is essentially  self-adjoint.
 We assume $z \in \mathbb{C}^+$.
 Let $\theta(x,z)$ and $\phi(x,z)$ be the solutions  of
 \begin{equation}\label{solution}
   \begin{cases}
-y^{\prime\prime}+qy=z y\\
y(1)=1\\
y^{\prime}(1)=0
\end{cases} \text{ and }
 \begin{cases}
-y^{\prime\prime}+qy=z y\\
y(1)=0\\
y^{\prime}(1)=1
\end{cases}.
 \end{equation}

Since both $0$ and $\infty$ are limit points, for $\Im z>0$,  there exist unique  $M_{-}(z)$ and
$M_{+}(z)$ so that
\begin{equation}\label{Weyl1}
  \psi_1(x,z) = \theta(x,z)+M_{-}(z)\phi(x,z)\in L^2(0,1]
\end{equation}
and
\begin{equation}\label{Weyl2}
   \psi_2(x,z) =\theta(x,z)+M_{+}(z)\phi(x,z)\in L^2[1,\infty).
\end{equation}

By the Weyl theory \cite[Formula 2.18.3]{titschmarch1958eigenfunction},
 we have
 \begin{equation}\label{Gweyl1}
 \int_0^1|\varphi(t,z)+M_{-}(z)\psi(t,z)|^2dt=\frac{\Im M_{-}(z)}{\Im z}
 \end{equation}
 and
 \begin{equation}\label{Gweyl2}
   \int_1^{\infty} | \varphi(t,z)+M_{+}(z)\psi(t,z)|^2 dt=-\frac{\Im M_{+}(z)}{\Im z}.
 \end{equation}
 Let
\begin{eqnarray}
   \label{M11} M_{11}(z) &=& -\frac{1}{M_{-}(z)-M_{+}(z)}, \\
  \label{M12}M_{12}(z) &=&  M_{21}(z)=-\frac{M_{-}(z)}{M_{-}(z)-M_{+}(z)}, \\
 \label{M22} M_{22}(z) &=&- \frac{M_{-}(z)M_{+}(z)}{M_{-}(z)-M_{+}(z)} .
\end{eqnarray}
All of $M_{jk}$, $j,k=1,2$ are Herglotz functions from $\mathbb{C}^+$ to $\mathbb{C}^+$.

Thus we can define monotone functions  $\rho_{jk}$ , $j,k=1,2$, (with locally bounded variation on $(-\infty,\infty)$, see p.58 in  \cite{titschmarch1958eigenfunction}) such that
\begin{equation*}
 \frac{1}{\pi}   M_{jk}(z)=\int_{\mathbb{R}}\frac{d\rho_{jk}(x)}{x-z}
\end{equation*}
for $\Im z>0$. Each
$\rho_{jk}(x)$ is unique up to a constant. Let $z=x+iy$ with $y>0$.
Then (formula 3.5.3 in p.58 of \cite{titschmarch1958eigenfunction})
\begin{equation}\label{Ginverse}
  \rho_{jk}(u_2)-\rho_{jk}(u_1)=\lim_{y\to 0}\frac{1}{\pi}\int_{u_1}^{u_2}\Im(M_{jk}(x+iy))dx.
\end{equation}
Denote  by $\rho$   the  matrix with coefficients $\rho_{jk}$, $j,k=1,2$ and let
\begin{equation*}
L^2_{\rho}=\left\{ g=\left(\begin{array}{c}
                     g_1 \\
                     g_2
                   \end{array}\right): ||g||_{\rho}^2=\int_{\mathbb{R}}\sum_{j,k=1}^2g_jg_k d\rho_{jk}(\lambda)<\infty
\right\}.
\end{equation*}
The inner product on $L^2_{\rho}$ is given by
\begin{equation*}
    (g,h)_{\rho}=\int_{\mathbb{R}}\sum_{j,k=1}^2g_jh_k d\rho_{jk}(\lambda).
\end{equation*}

\begin{theorem}\cite[formulas 3.1.8-3.1.11 in Chapter 3]{titschmarch1958eigenfunction}\label{ThmTchexp1}

  The following statements hold,
   \begin{itemize}
   \item[I:] for any $f\in L^2(\R^+,dx)$ there exists a unique $g=\left(\begin{array}{c}
                                                                          g_1 \\
                                                                          g_2
                                                                        \end{array}\right)
\in L^2_{\rho} $ such that
   \begin{equation}\label{g1}
    g_1(\lambda)=\int_{\mathbb{R}^+} f(x) \theta(x,\lambda)  dx,
   \end{equation}
and
 \begin{equation}\label{g2}
    g_2(\lambda)=\int_{\mathbb{R}^+} f(x) \phi(x,\lambda)  dx.
   \end{equation}
Denote $g=\hat{f}$ for simplicity.
   Conversely, for any
$g=\left(\begin{array}{c}
                                                                          g_1 \\
                                                                          g_2
                                                                        \end{array}\right)
\in L^2_{\rho} $,  there exists a unique  $f\in L^2(\R^+,dx)$ such that
$g=\hat{f}$.
   \item[II:] for any $f_1,f_2\in L^2(\R^+,dx)$, we have
   \begin{equation*}
    \int_{\mathbb{R}^+}f_1f_2 dx = (\hat{f}_1,\hat{f}_2)_{\rho}.
   \end{equation*}
   \item[III:] for any $f \in L^2(\R^+,dx)$, let $g=\left(\begin{array}{c}
                                                                          g_1 \\
                                                                          g_2
                                                                        \end{array}\right)=\hat{f}$. Then we have
   \begin{eqnarray}\label{Inversefourier}
      \label{Inversefourier}   f(x) &=& \int_{\mathbb{R}}\theta (x,\lambda)g_1(\lambda)d\rho_{11}(\lambda)+\theta (x,\lambda)g_2(\lambda)d\rho_{12}(\lambda) \\
                                                  \nonumber                                  && + \int_{\mathbb{R}}\phi (x,\lambda)g_1(\lambda)d\rho_{21}(\lambda)+\phi (x,\lambda)g_2(\lambda)d\rho_{22}(\lambda)
                                                                               \end{eqnarray}

   \item[IV:] Define the unitary operator  $U$ from $L^2(\R^+,dx)$ to $L^2_{\rho} $ by
   \begin{equation*}
    Uf=\hat{f}
   \end{equation*}
   which is called the generalized Fourier transform.  Then we have
   $\hat{L}=ULU^{-1}$ is the multiplication operator on $L^2_{\rho} $, that is,
      $$  \mathfrak{{D}}(\hat{L})=\{g:g(\lambda),\lambda g(\lambda) \in L^2_{\rho} \}$$
   and
   \begin{equation*}
    (\hat{L} g)(\lambda) =\lambda g(\lambda)
   \end{equation*}
   for $g\in \mathfrak{D}(\hat{L})$.

   \end{itemize}
   \end{theorem}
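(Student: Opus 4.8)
The plan is to reproduce the classical Weyl--Kodaira--Titchmarsh argument, organized around the resolvent kernel; I follow \cite{titschmarch1958eigenfunction} but spell the steps out. Fix $z\in\mathbb{C}^+$. Since $W(\theta,\phi)\equiv 1$ (it equals $1$ at $x=1$ by \eqref{solution} and the Wronskian of two solutions of the equation is constant), the Weyl solutions of \eqref{Weyl1}--\eqref{Weyl2} satisfy $W(\psi_1,\psi_2)=M_+(z)-M_-(z)\ne 0$, so $T-z$ has a bounded inverse, namely the integral operator with kernel $G(x,y,z)=\psi_1(\min(x,y),z)\,\psi_2(\max(x,y),z)/(M_+(z)-M_-(z))$. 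Writing $\psi_1=\theta+M_-\phi$, $\psi_2=\theta+M_+\phi$ and $(\varphi_1,\varphi_2)=(\theta,\phi)$, a direct computation using \eqref{M11}--\eqref{M22} gives
\begin{equation*}
G(x,y,z)=\sum_{j,k=1}^{2}\varphi_j(x,z)\,M_{jk}(z)\,\varphi_k(y,z)+R(x,y,z),
\end{equation*}
where $R(x,y,z)$ equals $\theta(x,z)\phi(y,z)$ for $x<y$ and $\phi(x,z)\theta(y,z)$ for $x>y$; the point is that $R$ involves no Weyl function and is entire in $z$, so it drops out of any boundary-value limit on the real axis. A computation with \eqref{Gweyl1}--\eqref{Gweyl2} together with \eqref{M11}--\eqref{M22} shows that the matrix $\left(\Im M_{jk}(z)\right)_{j,k=1}^2$ is positive semidefinite for $z\in\mathbb{C}^+$, so by \eqref{Ginverse} the $\rho_{jk}$ constitute a positive matrix-valued measure and $L^2_\rho$ is a genuine (semi-)Hilbert space.

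Next I would invoke Stone's formula. For $f\in C_0^\infty(0,\infty)$ set $g_j(\lambda)=\widehat f_j(\lambda)=\int_{\mathbb{R}^+}f(x)\varphi_j(x,\lambda)\,dx$ as in \eqref{g1}--\eqref{g2}; for real $\lambda$ these are real-analytic. From $\langle(T-z)^{-1}f,f\rangle=\iint G(x,y,z)f(x)\overline{f(y)}\,dx\,dy$ the remainder $R$ contributes only a term analytic across $\mathbb{R}\setminus\{0\}$, and what survives is $\sum_{j,k}M_{jk}(z)g_j\overline{g_k}$; applying \eqref{Ginverse} and the spectral theorem yields, for every bounded interval $\Delta$,
\begin{equation*}
\langle E_T(\Delta)f,f\rangle=\int_{\Delta}\sum_{j,k=1}^{2}g_j(\lambda)\overline{g_k(\lambda)}\,d\rho_{jk}(\lambda).
\end{equation*}
Letting $\Delta\uparrow\mathbb{R}$ gives the Parseval relation on the dense set $C_0^\infty(0,\infty)$, so $U\colon f\mapsto\widehat f$ extends to an isometry $L^2(\mathbb{R}^+,dx)\to L^2_\rho$; polarization gives statement II and the forward half of I. For $f\in C_0^\infty(0,\infty)$, statement IV is obtained by integrating by parts twice and using $-\varphi_j''+q\varphi_j=\lambda\varphi_j$: the inner boundary terms at $x=1$ cancel because $x=1$ is an interior point, and those at $0,\infty$ vanish by compact support, so $\widehat{(Tf)}=\lambda\widehat f$; the domain description and the general case of IV then follow once $U$ is known to be onto, since two self-adjoint operators agreeing on a core of one of them coincide.

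The remaining and genuinely delicate point is surjectivity of $U$, equivalently completeness of $\{\varphi_j(\cdot,\lambda)\}$ and the inversion formula \eqref{Inversefourier}. The cleanest route is to construct $U^{-1}$ directly: for $g=(g_1,g_2)\in L^2_\rho$ of bounded support put $(Vg)(x)=\int_{\mathbb{R}}\sum_{j,k}\varphi_j(x,\lambda)g_k(\lambda)\,d\rho_{jk}(\lambda)$, show $V$ extends to a bounded map into $L^2(\mathbb{R}^+,dx)$, and verify $UV=\mathrm{Id}$ on $L^2_\rho$; since $U$ is already an isometry, this yields both the converse in I (hence $U$ unitary) and \eqref{Inversefourier}. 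Establishing $UV=\mathrm{Id}$ is the substance of the matter: it requires a regularized evaluation of the (not absolutely convergent) reproducing kernel $\int\varphi_j(x,\lambda)\varphi_l(x,\mu)\,dx$ paired against the matrix measure $\rho$, carried out by truncating the $x$-integral to $(0,N)$, re-expressing the truncation through the resolvent kernel $G$, and letting $N\to\infty$ — precisely the contour manipulations behind formulas 3.1.8--3.1.11 of \cite{titschmarch1958eigenfunction}. I expect this completeness step — controlling the boundary behaviour of the Herglotz matrix $M_{jk}(\lambda+i\varepsilon)$ as $\varepsilon\downarrow 0$ and justifying passage to the limit through the double integral — to be the main obstacle; everything else is bookkeeping with Green's identity and Stone's formula.
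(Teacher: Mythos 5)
The paper offers no proof of this theorem at all: it is imported verbatim from Titchmarsh (formulas 3.1.8--3.1.11 of Chapter 3), so the ``paper's proof'' is the citation, and your outline reconstructs exactly the classical Weyl--Kodaira--Titchmarsh argument that the citation points to. The parts you actually carry out are correct: $W(\theta,\phi)\equiv 1$ gives $W(\psi_1,\psi_2)=M_+-M_-$; the splitting of the Green kernel as $\sum_{j,k}\varphi_j(x,z)M_{jk}(z)\varphi_k(y,z)$ plus the remainder $\theta(x,z)\phi(y,z)$ (for $x<y$), which is entire in $z$ and real on the real axis, is precisely the computation that produces \eqref{M11}--\eqref{M22} and explains why only the $M_{jk}$ survive the boundary-value limit; positivity of $\bigl(\Im M_{jk}\bigr)$ via \eqref{Gweyl1}--\eqref{Gweyl2} makes $L^2_\rho$ a bona fide space; Stone's formula plus \eqref{Ginverse} gives the Parseval identity on $C_0^\infty(0,\infty)$, hence II and the isometry half of I; and Green's identity gives IV once surjectivity is known. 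The one caveat is the one you flag yourself: the converse half of I and the inversion formula \eqref{Inversefourier} --- surjectivity of $U$, i.e.\ completeness of the generalized eigenfunctions --- are only described (truncate the $x$-integral, re-express through the resolvent kernel, pass to the limit), not executed, and this is the genuinely technical content of Titchmarsh's \S 3.1. So as a standalone proof your text is incomplete at precisely the step the paper delegates to the reference; to make it self-contained you would have to carry out that regularized limiting argument (controlling $M_{jk}(\lambda+i\varepsilon)$ as $\varepsilon\downarrow 0$ uniformly on compact $\lambda$-intervals away from the exceptional set), since the abstract intertwining relation $\widehat{(T-z)^{-1}f}=(\lambda-z)^{-1}\hat f$ by itself does not force $\mathrm{Ran}\,U=L^2_\rho$.
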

   \begin{theorem}\cite[formula 3.1.12 in Chapter 3]{titschmarch1958eigenfunction}\label{ThmTchexp}
   Suppose $\lim_{\Im z\to 0}\Im M_-(z)=0$.
  Then
  \begin{equation}\label{Grela}
    d\rho_{12}(\lambda)=M_-(\lambda) d\rho_{11}(\lambda), d\rho_{22}(\lambda)=M^2_-(\lambda) d\rho_{11}(\lambda).
  \end{equation}
  Moreover,
  the  following statements hold,
   \begin{itemize}
   \item[I:] for any $f\in L^2(\R^+,dr)$ there exists a unique $\hat{f}\in L^2(\R^+,d\rho_{11}) $ such that
   \begin{equation*}
    \hat{f}(\lambda)=\int_{\mathbb{R}^+} f(r) \psi_1(r,\lambda) dr.
   \end{equation*}
   Conversely, for any $g\in   L^2(\R,d\rho_{11})$, there exists a unique  $f\in L^2(\R^+,dr)$ such that $g=\hat{f}.$
   \item[II:] for any $f_1,f_2\in L^2(\R^+,dr)$, we have
   \begin{equation*}
    \int_{\mathbb{R}^+}f_1f_2 dr =\int_{\mathbb{R}}\hat{f}_1\hat{f}_2 d\rho_{11}.
   \end{equation*}
   \item[III:] for any $f \in L^2(\R^+,dr)$, let $g=\hat{f}$. Then  we have
   \begin{equation*}
    f(r) =\int_{\mathbb{R}} g(\lambda)  \psi_1( \lambda,r) d\rho_{11}(\lambda).
   \end{equation*}
   \item[IV:] Define the unitary operator  $U$ from $L^2(\R^+,dr)$ to $L^2(\R,d\rho_{11})$ by
   \begin{equation*}
    Uf=\hat{f}
   \end{equation*}
   which is called the generalized Fourier transformation.  Then we have
   $\hat{L}=ULU^{-1}$ is the multiplication operator on $L^2(\R^+,d\rho)$, that is,
    $$  \mathfrak{{D}}(\hat{L})=\{g:g(\lambda),\lambda g(\lambda) \in L^2(\R,d\rho_{11}) \}$$
   and
   \begin{equation*}
    (\hat{L} g)(\lambda) =\lambda g(\lambda)
   \end{equation*}
   for $g\in \mathfrak{D}(\hat{L})$.

   \end{itemize}

   \end{theorem}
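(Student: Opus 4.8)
The plan is to reduce Theorem \ref{ThmTchexp} to the matrix-valued eigenfunction expansion of Theorem \ref{ThmTchexp1}: I will show that the hypothesis $\lim_{\Im z\to0}\Im M_-(z)=0$ forces the $2\times2$ spectral matrix $\rho=(\rho_{jk})$ to be rank one, namely
\begin{equation*}
d\rho=\begin{pmatrix}1\\ M_-\end{pmatrix}\begin{pmatrix}1 & M_-\end{pmatrix}d\rho_{11}
=\begin{pmatrix}d\rho_{11} & M_-\,d\rho_{11}\\[2pt] M_-\,d\rho_{11} & M_-^2\,d\rho_{11}\end{pmatrix},
\end{equation*}
after which everything in I--IV becomes a linear change of variables inside Theorem \ref{ThmTchexp1}. \emph{Step 1 (algebra).} From \eqref{M11}--\eqref{M22} one checks directly, for $\Im z>0$, the pointwise identities $M_{12}(z)=M_-(z)M_{11}(z)$ and $M_{22}(z)=M_-(z)^2M_{11}(z)+M_-(z)$, each a one-line computation using $M_{11}=-1/(M_--M_+)$.

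\emph{Step 2 (from functions to measures).} Next I upgrade the identities of Step 1 to the measure identities $d\rho_{12}=M_-\,d\rho_{11}$, $d\rho_{22}=M_-^2\,d\rho_{11}$ by means of the Stieltjes inversion formula \eqref{Ginverse}. Writing $z=\lambda+iy$, $M_-(z)=\alpha(z)+i\beta(z)$, $M_{11}(z)=a(z)+ib(z)$ with $\beta,b>0$, one has $\Im M_{12}(z)=\alpha b+\beta a$ and $\Im M_{22}(z)=(\alpha^2-\beta^2)b+2\alpha\beta a+\beta$, and since $\beta(z)=\Im M_-(z)\to0$ as $y\to0$ the terms carrying a factor $\beta$ should drop out, leaving $\tfrac1\pi\Im M_{12}(\lambda+iy)\,d\lambda\to M_-(\lambda)\,d\rho_{11}$ and $\tfrac1\pi\Im M_{22}(\lambda+iy)\,d\lambda\to M_-(\lambda)^2\,d\rho_{11}$ weakly, where $M_-(\lambda):=\lim_{y\to0}M_-(\lambda+iy)$ is the a.e.\ finite real boundary value (it exists a.e.\ by Fatou's theorem and is real a.e.\ by hypothesis). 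The delicate point is that $\alpha=\Re M_-$ and $a=\Re M_{11}$ are not controlled near the singular set $E$ of $M_-$ (in the limit-point-at-$0$ situation relevant here $M_-$ is in fact meromorphic and $E$ is the discrete set of its poles), so one localizes: on compact subintervals of $\mathbb R\setminus E$, where $M_-$ extends continuously and real-valued to the axis, the relations follow by applying \eqref{Ginverse} to $M_{12}(\lambda+i0)=M_-(\lambda)M_{11}(\lambda+i0)$; across $E$ the function $M_{11}=-1/(M_--M_+)$ is bounded (indeed vanishes, since $M_-\to\infty$ while $M_+$ stays finite there), so $\rho_{11}$ gives a neighborhood of $E$ no mass and $M_-\,d\rho_{11}$, $M_-^2\,d\rho_{11}$ are well-defined finite measures. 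This is exactly the argument carried out in Titchmarsh \cite[\S3.1]{titschmarch1958eigenfunction}.

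\emph{Step 3 (harvesting I--IV).} For $f\in L^2(\R^+,dr)$ let $(g_1,g_2)=\hat f$ be the matrix transform of Theorem \ref{ThmTchexp1}, and set $\hat f(\lambda):=g_1(\lambda)+M_-(\lambda)g_2(\lambda)$; by \eqref{g1}--\eqref{g2} and $\psi_1(r,\lambda)=\theta(r,\lambda)+M_-(\lambda)\phi(r,\lambda)$ this equals $\int_{\R^+}f(r)\psi_1(r,\lambda)\,dr$, the first half of I. Using the rank-one form of $d\rho$,
\begin{equation*}
\|f\|_{L^2(\R^+,dr)}^2=\int\sum_{j,k}g_jg_k\,d\rho_{jk}=\int\bigl(g_1+M_-g_2\bigr)^2\,d\rho_{11}=\int|\hat f|^2\,d\rho_{11},
\end{equation*}
which is the isometry in II (polarize for $f_1,f_2$); surjectivity onto $L^2(\R^+,d\rho_{11})$ follows since the matrix transform is onto, so for any $h$ in the target some $f$ has matrix datum $(h,0)$, hence scalar transform $h$. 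For III, substituting $d\rho_{12}=d\rho_{21}=M_-\,d\rho_{11}$ and $d\rho_{22}=M_-^2\,d\rho_{11}$ into \eqref{Inversefourier} collapses the integrand to $\bigl(\theta(x,\lambda)+M_-(\lambda)\phi(x,\lambda)\bigr)\bigl(g_1+M_-g_2\bigr)=\psi_1(x,\lambda)\hat f(\lambda)$, giving the inversion formula. Finally, the map $L^2_\rho\to L^2(\R^+,d\rho_{11})$, $(g_1,g_2)\mapsto g_1+M_-g_2$, is unitary (its kernel is exactly the $\rho$-null set modded out in $L^2_\rho$, by the norm identity above, and it is onto as just noted) and intertwines multiplication by $\lambda$; composing it with the matrix transform, which diagonalizes $L$ by Theorem \ref{ThmTchexp1}(IV), yields IV with the asserted domain.

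\emph{Main obstacle.} The only genuinely delicate step is Step 2: upgrading the pointwise identities $M_{12}=M_-M_{11}$ and $M_{22}=M_-^2M_{11}+M_-$ on $\mathbb C^{+}$ to the corresponding identities between the boundary measures $\rho_{jk}$. Because $\Re M_-$ and $\Re M_{11}$ are unbounded near the singular set of $M_-$, one cannot pass to the limit under the integral in \eqref{Ginverse} globally; the resolution is the localization argument of Step 2 (compact subintervals away from the singular set, together with the observation that $\rho_{11}$ ignores a neighborhood of that set and that the boundary value of $M_-$ exists a.e.\ by Fatou's theorem), which is precisely what the "full details" of Titchmarsh's treatment supply.
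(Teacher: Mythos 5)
Your proposal is, in substance, exactly the derivation the paper relies on: the paper gives no proof of Theorem \ref{ThmTchexp} at all, quoting it as formula 3.1.12 of Titchmarsh, and what you write out (the rank-one collapse $d\rho_{jk}$ via $M_{12}=M_-M_{11}$, $M_{22}=M_-^2M_{11}+M_-$ from \eqref{M11}--\eqref{M22}, Stieltjes inversion \eqref{Ginverse}, then the linear change of variables $(g_1,g_2)\mapsto g_1+M_-g_2$ inside Theorem \ref{ThmTchexp1}) is precisely Titchmarsh's argument; Steps 1 and 3 are correct as stated. One correction to Step 2, though: your claim that near the poles of $M_-$ the measure $\rho_{11}$ ``gives a neighborhood of $E$ no mass'' is false, and in fact fails in the very application at hand, where $\rho_{11}$ is absolutely continuous with strictly positive density $\tfrac1\pi\Im M_{11}(\lambda+i0)=\tfrac1\pi\,\Im M_+/|M_--M_+|^2$ on punctured neighborhoods of the zeros of $J_{\nu}(\sqrt\lambda)$ inside the essential spectrum (cf.\ \eqref{equ4}). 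What is true, and what makes \eqref{Grela} meaningful there, is that this density vanishes to second order at a pole of $M_-$, exactly compensating the blow-up of $M_-^2$, so that $M_-\,d\rho_{11}$ and $M_-^2\,d\rho_{11}$ are locally finite and coincide with $d\rho_{12}$, $d\rho_{22}$; equivalently, one can take the limit in \eqref{Ginverse} over intervals whose endpoints avoid the (discrete) pole set and note that single poles carry no mass unless they are eigenvalues with eigenfunction vanishing at $r=1$ --- the degenerate case that the paper itself handles later, in the proof of Theorem \ref{Thmexp}, by renormalizing to $d\rho=d\rho_{22}/(\tilde J_{\nu}'(1,\lambda))^2$. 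With that one-line repair your argument is the standard and intended one.
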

   We remark that $\psi_1(r,\lambda)$ is given by \eqref{Weyl1} and $\rho_{11}$ is given by \eqref{Ginverse}.
\begin{proof}[\bf Proof of Theorem \ref{Thmexp}]
We will use Theorems \ref{ThmTchexp1} and \ref{ThmTchexp} to prove Theorem \ref{Thmexp}.
Applying Theorem \ref{ThmTchexp} to operator \eqref{equ1}, we obtain $M_{jk}$ and $\rho_{jk}$.
By the assumption that $-D^2+V$ is non-negative and $0$ is not an eigenvalue,
$d\rho_{jk}(\lambda)$ is supported  on $(0,\infty)$, for $j,k=1,2$.

Recall that   $\tilde{J}_{\nu}(z,r)$, $z\in \mathbb{C}$, is the unique solution of \eqref{equ1}   in $L^2(0,1]$.
Thus one has for $r>0$,
\begin{equation}\label{equ2}
  \psi_1(r,z)= \theta(r,z)+M_{-}(z)\phi(r,z) =C\tilde{J}_{\nu}(r,z).
\end{equation}

Let $r=1$ in \eqref{equ2}, using the boundary condition of $  \theta,\phi$ at $r=1$ and $\tilde{J}_{\nu}(r,z)=\sqrt{r}J_{\nu}(\sqrt{z}r)$, one has
\begin{equation*}
    C=\frac{1}{J_{\nu}(\sqrt{z})},
\end{equation*}
and
\begin{equation}\label{equ3}
  M_{-}(z)=\frac{\tilde{J}^{\prime}_{\nu}(1,z)}{\tilde{J}_{\nu}(1,z)}=\frac{1}{2}+\frac{\sqrt{z}}{J_{\nu}(\sqrt{z})}{J_{\nu}^{\prime}(\sqrt{z})},
\end{equation}
for $\Im z>0$.
It implies
\begin{equation}\label{Gpsi1}
   \psi_1(r,z)= \frac{\tilde{J}_{\nu}(r,z)}{J_{\nu}(\sqrt{z})}.
\end{equation}

Thus $ M_{-}(z) $ can be extended to $\mathbb{R}$ except for the zeros of $J_{\nu}(\sqrt{z})$, that is
\begin{equation}\label{equ4}
  M_{-}(z)=\frac{1}{2}+\frac{\sqrt{z}}{J_{\nu}(\sqrt{z})}{J_{\nu}^{\prime}(\sqrt{z})},
\end{equation}
for  $\Im z\geq 0$. Moreover,
\begin{equation}\label{equ5}
 \Im M_{-}(\lambda)=0,
\end{equation}
for $\lambda\geq 0$ and $J_{\nu}(\sqrt{\lambda})\neq 0$.

Let
\begin{equation}\label{Grho}
  d {\rho}(\lambda)=\frac{1}{J_{\nu}^2(\sqrt{\lambda})}d\rho_{11}(\lambda).
\end{equation}

By \eqref{Gpsi1} and Theorem \ref{ThmTchexp}, we obtain Theorem
\ref{Thmexp} except for  the local   boundedness of variation of $ \rho$ on
$(0,\infty)$. To prove the latter, fix $[a,b]\subset (0,\infty)$.
For any given $\lambda_0\in [a,b]$,  $ \rho$ is of bounded  variation in a neighborhood of $\lambda_0$ if $J_{\nu}(\sqrt{\lambda_0})\neq 0$.
Suppose $J_{\nu}(\sqrt{\lambda_0})= 0 $. It is easy to see that  $\tilde{J}^{\prime}_{\nu}(1,\lambda_0)\neq 0$ so that  $|\tilde{J}^{\prime}_{\nu}(1,\lambda)|>\delta>0$ for $\lambda\in(\lambda_0-\epsilon,\lambda_0+\epsilon)$ with some $\epsilon,\delta>0$.

By \eqref{Grela} and \eqref{equ3}, one has
 \begin{equation*}
 d\rho_{22}=\frac{(\tilde{J}^{\prime}_{\nu}(1,\lambda))^2}{J^2_{\nu}(\sqrt{\lambda})}d\rho_{11}.
 \end{equation*}
 Thus
 \begin{equation*}
    d\rho= \frac{1}{(\tilde{J}^{\prime}_{\nu}(1,\lambda))^2} d\rho_{22}.
 \end{equation*}
 By the fact that $\rho_{22}$ is of  bounded variation   on $[a.b]$
 and  $|\tilde{J}^{\prime}_{\nu}(1,\lambda)|>\delta>0$ for
 $\lambda\in(\lambda_0-\epsilon,\lambda_0+\epsilon)$, we have that $\rho$
 is of bounded variation   on
 $\lambda\in(\lambda_0-\epsilon,\lambda_0+\epsilon)$. Since there are
 finitely many zeros of $J_{\nu}(\sqrt{\lambda})$ in $[a,b]$, this
 completes the proof.

\end{proof}
\begin{lemma}\label{el}
Under the condition of Theorem \ref{Thmexp}, suppose $\lambda_0>0$  is an eigenvalue.
  Then $\rho(\lambda_0)= \|\tilde{{J}}_{\nu}(\cdot,\lambda_0)\|_{L^2(\mathbb{R}^+)}^{-2}.$
\end{lemma}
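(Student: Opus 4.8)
The plan is to identify the eigenfunction at $\lambda_0$ explicitly as $\tilde J_\nu(\cdot,\lambda_0)$ and then extract the size of the atom of $d\rho$ at $\lambda_0$ from the Plancherel and inversion formulas of Theorem \ref{Thmexp}. Throughout, $\rho(\lambda_0)$ is to be read as the jump $\rho(\lambda_0+0)-\rho(\lambda_0-0)$, i.e. the $d\rho$-mass $m:=\rho(\{\lambda_0\})$ of the singleton.

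First I would pin down the eigenfunction. Since $L$ is in the limit point case at $0$, the equation $Lu=\lambda_0 u$ has, up to a scalar, only one solution lying in $L^2$ near $0$, namely $u_0:=\tilde J_\nu(\cdot,\lambda_0)$. Hence any eigenfunction for $\lambda_0$ is a multiple of $u_0$; in particular, $\lambda_0$ being an eigenvalue forces $u_0\in L^2(\mathbb{R}^+,dr)$, so that $N^2:=\|u_0\|_{L^2(\mathbb{R}^+)}^2\in(0,\infty)$, and the eigenspace is one-dimensional and spanned by $u_0$. Moreover $u_0$ is real-valued, since for $\lambda_0>0$ the function $J_\nu(\sqrt{\lambda_0}\,r)$ is real and its continuation to $r\ge 1$ solves a real ODE with real initial data.

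Next, apply the generalized Fourier transform $U$ of Theorem \ref{Thmexp}. As $u_0\in\mathfrak{D}(L)$ with $Lu_0=\lambda_0 u_0$, part IV gives $\lambda\,\widehat{u_0}(\lambda)=\widehat{Lu_0}(\lambda)=\lambda_0\,\widehat{u_0}(\lambda)$ in $L^2(\mathbb{R}^+,d\rho)$, hence $(\lambda-\lambda_0)\widehat{u_0}(\lambda)=0$ for $d\rho$-a.e.\ $\lambda$; thus $\widehat{u_0}$ vanishes $d\rho$-a.e.\ off $\{\lambda_0\}$. Since $U$ is unitary and $u_0\neq 0$, we have $\widehat{u_0}\neq 0$, so necessarily $m>0$ and $\widehat{u_0}=c\,\chi_{\{\lambda_0\}}$ for some constant $c\neq 0$. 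Two identities then finish the computation. Plancherel (part II), applied with $f_1=f_2=u_0$ and using that $u_0$ is real, gives
\[
 N^2=\int_{\mathbb{R}^+}u_0^2\,dr=\int_{\mathbb{R}^+}\widehat{u_0}^{\,2}\,d\rho=c^2 m .
\]
The inversion formula (part III), applied with $f=u_0$ and $g=\widehat{u_0}=c\,\chi_{\{\lambda_0\}}$, gives
\[
 u_0(r)=\int_{\mathbb{R}^+}\widehat{u_0}(\lambda)\,\tilde J_\nu(r,\lambda)\,d\rho(\lambda)=c\,m\,\tilde J_\nu(r,\lambda_0)=c\,m\,u_0(r),
\]
and since $u_0\not\equiv 0$ this forces $c\,m=1$. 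Substituting $c=1/m$ into $N^2=c^2 m$ yields $N^2=1/m$, i.e.\ $m=1/N^2=\|\tilde J_\nu(\cdot,\lambda_0)\|_{L^2(\mathbb{R}^+)}^{-2}$, as claimed.

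The only step that genuinely uses the structure built in Theorem \ref{Thmexp}, rather than soft arguments, is the identification $\widehat{u_0}=c\,\chi_{\{\lambda_0\}}$ together with $m>0$: here the limit-point hypothesis at $0$ (which makes the eigenvalue simple and the eigenfunction proportional to $\tilde J_\nu(\cdot,\lambda_0)$) and the unitarity of $U$ are both essential. Everything else is a two-line computation, and I do not expect a real obstacle.
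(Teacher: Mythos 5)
Your argument is correct, and it follows the same overall scheme as the paper: identify the eigenfunction with $\tilde J_\nu(\cdot,\lambda_0)$ (limit point at $0$ makes this forced), observe that its generalized Fourier transform is a multiple of $\chi_{\{\lambda_0\}}$, and combine with the Parseval identity II. The one place where you diverge is in how the normalization constant is pinned down: the paper evaluates $\hat f$ pointwise at the atom directly from the defining integral, getting $\hat f(\lambda_0)=\int_{\mathbb{R}^+}\tilde J_\nu^2(r,\lambda_0)\,dr$, and then uses $\|\hat f\|_\rho^2=|\hat f(\lambda_0)|^2\rho(\lambda_0)$ together with II to get $\|f\|^2=\|f\|^4\rho(\lambda_0)$; you instead invoke part IV (the multiplication-operator form of $L$) to justify $\widehat{u_0}=c\,\chi_{\{\lambda_0\}}$ with $m=\rho(\{\lambda_0\})>0$ and then part III (inversion) to conclude $cm=1$, from which $m=1/N^2$ follows via II. Your route is slightly more explicit about why $\widehat{u_0}$ is supported on the atom (the paper just cites Theorem \ref{Thmexp}) and it avoids any pointwise evaluation of the transform, replacing it by the inversion formula, which at an atom reduces to a one-term sum; the paper's route is marginally shorter. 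Either way the computation closes, so there is no gap.
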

\begin{proof}

Suppose $\lambda_0$ is an eigenvalue.
 Then  $f=\tilde{{J}}_{\nu}(r, \lambda_0)$ is the  corresponding eigenfunction.
The Fourier transform  $\hat{f}$ of  $f$ is well defined, and
\begin{equation*}
     \hat{{f}}(\lambda)=\int_{\mathbb{R}^+}f(r)\tilde{{J}}_{\nu}(r, \lambda) dr=\int_{\mathbb{R}^+}\tilde{{J}}_{\nu}(r, \lambda_0)\tilde{{J}}_{\nu}(r, \lambda) dr.
\end{equation*}
It leads to
\begin{equation}\label{Gmay201}
    \hat{{f}}(\lambda_0)=\int_{\mathbb{R}^+}\tilde{{J}}^2_{\nu}(r, \lambda_0) dr.
\end{equation}
By the fact that $\lambda_0$ is an eigenvalue and Theorem   \ref{Thmexp}, one has
\begin{equation*}
    \hat{f}= C\chi_{\{\lambda_0\}}(\lambda)
\end{equation*}
where $C $ is a constant and $\chi_{\{\lambda_0\}}(\lambda)$  is the characteristic  function of  $\lambda_0$.
It implies
\begin{equation}\label{Gmay202}
   | |\hat{f}||^2_{\rho}=|\hat{f}(\lambda_0)|^2\rho(\lambda_0),
\end{equation}
where $| |\hat{f}||^2_{\rho}=(\int_{\R^+}|\hat{f}(\lambda)|^2d\rho)^{1/2}$.
By II of Theorem   \ref{Thmexp},
 we have
\begin{equation}\label{Gmay203}
   ||\hat{f}||^2_{\rho}=\int_{\mathbb{R}^+}f(r)^2 dr=\int_{\mathbb{R}^+}\tilde{{J}}^2_{\nu}(r, \lambda_0) dr.
\end{equation}
Now the Lemma follows from \eqref{Gmay201}, \eqref{Gmay202} and \eqref{Gmay203}.
\end{proof}

 \section{Preparations}

In this section, we will use Kiselev's construction \cite{kiselev2005imbedded} to prove
 \begin{theorem}\label{thmke}
Fix any $\tau\geq0$, $0<\delta<1/2$, $\nu>1$ and positive function $h(r)$ such that $\lim_{r\to \infty}h(r)=\infty$.
Suppose $\tilde{V}(r)\in C^{\infty}(0,b)$ with $b\geq 10$ and for $r\leq 1$, $\tilde{V}(r)=\frac{\nu^2-\frac{1}{4}}{r^2}$.  Then
there exist  a potential $V(r)$  on $(0,\infty)$ satisfying  the following statements:

\begin{itemize}
\item [I.] $V(r)=\tilde{V}(r)$ for $0<r\leq b-\delta$.
  \item [II.] $V(r)\in C^{\infty}(0,\infty)$  and $|V(r)-\tau^2|\leq \frac{h(r)}{1+r}$ for $r\geq b+\delta$.
  \item  [III.] $\sup_{b-\delta\leq r\leq b+\delta}|V(r)|\leq \tau^2+1+\sup_{b-1\leq r\leq b}\tilde{V}(r)|$.
  \item [IV.] $-D^2+V$ has singular continuous spectrum on $[\tau^2,\infty)$.
\end{itemize}
\end{theorem}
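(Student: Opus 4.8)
The plan is to assemble $V$ from three pieces. Keep $V=\tilde V$ on $(0,b-\delta]$, which gives I. On $[b+\delta,\infty)$ set $V(r)=\tau^{2}+W(r-b-\delta)$, where $W$ on $[0,\infty)$ is Kiselev's potential \cite{kiselev2005imbedded}: it is $C^{\infty}$, vanishes on an initial segment $[0,\kappa_{0}]$ (push the first bump out), satisfies $|W(x)|\le \tilde h(x)/(1+x)$, and $-D^{2}+W$ has $\sigma_{{\rm ess}}=\sigma_{{\rm ac}}=[0,\infty)$ with a nonempty embedded singular continuous part of positive singular mass on a Lebesgue-null set. Choosing Kiselev's growth function to be $\tilde h(x):=\tfrac12\inf_{y\ge x}h(y+b+\delta)$, which still tends to $\infty$, property II follows at once from $|V(r)-\tau^{2}|=|W(r-b-\delta)|$. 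On the gluing interval $[b-\delta,b+\delta]$ I would interpolate by $V=\tau^{2}+\eta\cdot(\tilde V-\tau^{2})$, where $\eta\in C^{\infty}$ equals $1$ near $b-\delta$ and $0$ on $[b-\delta+\kappa,b+\delta]$ with $0<\kappa<\delta$; then $V\in C^{\infty}(0,\infty)$, it matches $\tilde V$ to all orders at $b-\delta$ and the constant $\tau^{2}=\tau^{2}+W(0)$ to all orders at $b+\delta$, and by convexity $|V|\le\max(\tau^{2},|\tilde V|)\le \tau^{2}+\sup_{[b-1,b]}|\tilde V|$ on $[b-\delta,b+\delta]$ (using $[b-\delta,b-\delta+\kappa]\subset[b-1,b)$ since $\delta<\tfrac12$), which is III.

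The content is IV. By Theorem \ref{Thmexp} the spectral measure of $L=-D^{2}+V$ is $d\rho=J_{\nu}(\sqrt\lambda)^{-2}\,d\rho_{11}$, where $\tfrac1\pi M_{11}(z)=\int(x-z)^{-1}\,d\rho_{11}(x)$ and $M_{11}=-(M_{-}-M_{+})^{-1}$; here $M_{-}$ is the Bessel Weyl function, which is fixed and real on $(0,\infty)$ by \eqref{equ5}, while $M_{+}$ is the right half line ($[1,\infty)$) Weyl function of $-D^{2}+V$, shaped by the construction. Hence $\rho_{{\rm ac}}'(\lambda)\propto J_{\nu}(\sqrt\lambda)^{-2}\cdot\dfrac{-\Im M_{+}(\lambda+i0)}{|M_{-}(\lambda)-M_{+}(\lambda+i0)|^{2}}$, which is positive for a.e.\ $\lambda>\tau^{2}$ because the right half line operator has a.c.\ spectrum $[\tau^{2},\infty)$ — this equals $\tau^{2}$ plus $\sigma_{{\rm ac}}(-D^{2}+W)$, since both are governed by $W$ near $+\infty$ (decomposition principle) and are unaffected by the bounded modification of the potential on the bounded set $(0,b+\delta]$; likewise $\sigma_{{\rm ess}}(L)=[\tau^{2},\infty)$ by Weyl's theorem. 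So $\sigma_{{\rm ess}}(L)=\sigma_{{\rm ac}}(L)=[\tau^{2},\infty)$, and the singular part of $\rho_{11}$ concentrates on the resonance set $S=\{\lambda>\tau^{2}:\ \Im M_{+}(\lambda+i0)=0,\ \Re M_{+}(\lambda+i0)=M_{-}(\lambda)\}$.

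Thus IV amounts to producing $V$ for which $S$ is a nonempty Lebesgue-null set of positive $\rho$-mass (hence automatically atom-free, i.e.\ singular continuous). I would open up Kiselev's construction: his potential is a sum of bumps placed at $x_{n}\to\infty$, the $(n{+}1)$-st chosen from the Pr\"ufer data accumulated on bumps $1,\dots,n$ so that a prescribed null set of energies acquires at $+\infty$ a subordinate solution with amplitude $\sim x^{-\gamma}$, $0<\gamma<\tfrac12$, carrying positive spectral mass, while a.e.\ energy keeps a bounded generalized eigenfunction. The only input the left endpoint of the half line supplies to this induction is the initial solution vector that the induction starts steering; replacing Kiselev's regular left datum by the nonzero, energy-analytic Cauchy data of the Bessel solution $\tilde J_{\nu}(\cdot,\lambda)$ at $r=b+\delta$ (transported through the fixed piece $\tilde V|_{[1,b-\delta]}$ and the interpolation) does not disturb it. Run with that initialization, the construction makes $\tilde J_{\nu}(\cdot,\lambda)$ — the $L^{2}$-near-$0$ solution of Theorem \ref{Thmexp} — subordinate but not square integrable at $+\infty$ for $\lambda$ in a translate of Kiselev's null set; since $\gamma<\tfrac12$ these $\lambda$ are not eigenvalues (Lemma \ref{el}), so $\rho$ has positive singular continuous mass on $[\tau^{2},\infty)$, which is IV. \textbf{The main obstacle is exactly this transplantation:} verifying that Kiselev's quantitative bump lemmas — those controlling how each bump rotates and rescales the Pr\"ufer variables at the resonant energies and leaves them bounded elsewhere — use only boundedness and nonvanishing of the incoming solution on each bump's support, uniformly over the relevant compact energy window, and therefore survive verbatim when the incoming solution is the Bessel Cauchy vector rather than Kiselev's original one. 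A subsidiary point is to check that $S$, where the non-subordinate solution is unbounded, stays disjoint from the essential support of $\rho_{{\rm ac}}$, so that the singular continuous mass is genuinely embedded and not reabsorbed into the absolutely continuous part.
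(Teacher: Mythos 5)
Your plan is essentially the paper's: I--III by keeping $\tilde V$ on $(0,b-\delta]$, smoothing/gluing near $r=b$, and using the decay of Kiselev's potential; and IV by running Kiselev's construction \cite{kiselev2005imbedded} with the tracked solution replaced by the Bessel data, which is exactly what the paper does by substituting the modified Pr\"ufer variables \eqref{pruferam}--\eqref{pruferan} and Lemma \ref{el} (through the expansion of Theorem \ref{Thmexp}) for Kiselev's (2.2)--(2.3) and his Lemma 2.1. Two cautions so that your write-up matches what actually works. First, the version in your opening paragraph --- take Kiselev's fixed potential $W$ as a black box, translate it to $[b+\delta,\infty)$, and glue --- would not by itself give IV: embedded singular continuous spectrum is tied to the particular solution singled out by the left data, and Kiselev's bumps are chosen adaptively from the Pr\"ufer phases of that solution, so changing the left end (here the limit-point Bessel singularity at $0$ together with the piece on $(0,b+\delta]$) can destroy the subordinacy set. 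The operative plan is the one in your final paragraph, re-running the algorithm initialized by the Cauchy data of $\tilde{J}_{\nu}(\cdot,\lambda)$ transported to $r=b+\delta$; the ``main obstacle'' you flag --- that the bump lemmas use only the incoming Pr\"ufer data, uniformly on compact energy windows --- is precisely what the paper's two substitutions are asserted to handle, and it is left there at a comparable level of detail. Second, Kiselev's potential is not $C^{\infty}$, so your assertion that $W\in C^{\infty}$ is not available off the shelf; since the construction is adaptive one cannot simply mollify at the end, and the paper smooths each piece before constructing the next, which is the same fix you should build into your induction.
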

 Let $u$ be a solution of  $-D^2+\frac{\nu^2-\frac{1}{4}}{r^2}$, $r<1$, such that
 \begin{equation*}
    u\in L^2(0,1).
 \end{equation*}
 Recall that (by \eqref{Gaug1} and \eqref{Gaug2})  for $r<1$,
 \begin{equation*}
    u(r,\lambda)= \sqrt{r}{J}_{\nu}(\sqrt{\lambda}r).
 \end{equation*}
Let $\lambda=k^2$.  For
 any $k>\tau$, let
 \begin{equation*}
    \bar{k}=\sqrt{k^2-\tau^2}.
 \end{equation*}
 Set $V(r)=\frac{\nu^2-\frac{1}{4}}{r^2}$ for $r<1$. Suppose we construct potentials   $V(r)$ on $(0,x]$ .  We extend $u$  to $(0,x]$ by solving
 \begin{equation*}
   (-D^2+V)u(r)=\lambda u(r)
 \end{equation*}
 for $0\leq r\leq x$.

 It will be convenient to introduce the modified   Pr\"ufer variables
$R$ and $\theta,$ $R^2 = (u')^2 + \bar{k}^2 u^2$ and $\theta = \tan^{-1}(\bar{k}u/u')$ for $r\geq 1$.
 Then it is easy to see that for $r\geq 1$,
\begin{eqnarray}\label{pruferam}
(\log R(r,\bar{k})^2)' = \frac{1}{\bar{k}}({V}(r)-\tau^2) \sin 2\theta(r,\bar{k}), \\
\label{pruferan}
\theta(r,\bar{k})' = \bar{k} - \frac{1}{\bar{k}}({V}(r)-\tau^2) (\sin \theta)^2.
\end{eqnarray}

\begin{proof}[\bf Proof of Theorem \ref{thmke}]
The proof of Theorem \ref{thmke} closely follows the construction of
\cite[Theorem 1.1]{kiselev2005imbedded}, so we skip the details.
We point out several  small modifications.
\begin{itemize}
  \item Replace Lemma 2.1 in \cite{kiselev2005imbedded}  with Lemma \ref{el}. Replace the Pr\"ufer variables (2.2) and (2.3) in \cite{kiselev2005imbedded} with \eqref{pruferam} and \eqref{pruferan}.
  \item  I and III  follow from Theorem 1.1 in \cite{kiselev2005imbedded}.
  \item  The potential  constructed  in  \cite{kiselev2005imbedded} is not smooth. This issue can be addressed in the following way. In \cite{kiselev2005imbedded}, Kiselev  constructed  the potential $V$  piece by piece.  We  need to smooth the potential for the current piece first and then construct the next piece.
   II  comes from the fact that we  smooth the potential around $r=b$.
\end{itemize}
\end{proof}

Without loss of  generality, assume  $h(r)$ is positive,  non-decreasing, $\lim_{r\to\infty} h(r)=\infty$ and
\begin{equation*}
  h(r)\leq  1+r^{1/10}.
\end{equation*}
In the following $b$ is a large positive constant, and $\delta$ is a small positive constant. We will need the following Lemma.

\begin{lemma}[Comparison theorem] \label{lecom1}
Suppose $f(r)\geq g(r)$ for $2\leq r\leq r_0$. Let us consider two differential equations for $r\geq r_0$,
\begin{equation}\label{Gode1}
    f^{\prime}+m(r)f^2(r)+\frac{Ae^{(n-1)r}}{\exp(\int_2^r2(1 +f(x)e^{-(n-1)x})dx)}=h_1(r)
\end{equation}
and
\begin{equation}\label{Gode2}
    g^{\prime}+m(r)g^2(r)+\frac{Ae^{(n-1)r}}{\exp(\int_2^r2(1+g(x)e^{-(n-1)x})dx)}=h_2(r),
\end{equation}
where  A is a non-negative constant and $m(r)\geq 0$.
Suppose $f(r_0)\geq g(r_0)$ and $h_1(r)\geq h_2(r)$ for all $r>r_0$.
Then $f(r)\geq g(r)$ for all possible $r\geq r_0$.
\end{lemma}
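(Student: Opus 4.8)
The plan is to prove this comparison result by a standard ODE argument, treating the difference $w(r) = f(r) - g(r)$ and showing it cannot become negative. First I would observe that at $r = r_0$ we have $w(r_0) \geq 0$, and the goal is to show $w(r) \geq 0$ persists. The natural approach is a continuity/connectedness argument: suppose for contradiction that $w$ becomes negative somewhere, and let $r_1 = \inf\{r > r_0 : w(r) < 0\}$. By continuity of solutions, $w(r_1) = 0$ and $w(r) \geq 0$ on $[r_0, r_1]$, while $w$ takes negative values immediately to the right of $r_1$; in particular $w'(r_1) \leq 0$.

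Next I would subtract \eqref{Gode2} from \eqref{Gode1} evaluated at $r = r_1$. The term $h_1(r_1) - h_2(r_1) \geq 0$ by hypothesis. The quadratic term contributes $m(r_1)(f^2(r_1) - g^2(r_1)) = m(r_1)(f(r_1)+g(r_1)) w(r_1) = 0$ since $w(r_1) = 0$ — here I need that $f(r_1) = g(r_1)$, which holds, so the sign of $m$ and of $f+g$ is actually irrelevant at $r_1$ itself; the point of $m \geq 0$ will be more subtle, see below. For the exponential term, since $w(r) \geq 0$ on $[2, r_1]$ (using $f \geq g$ on $[2, r_0]$ from the hypothesis together with $w \geq 0$ on $[r_0, r_1]$), and $e^{-(n-1)x} > 0$, the exponent $\int_2^{r_1} 2(1 + f(x)e^{-(n-1)x})\,dx \geq \int_2^{r_1} 2(1 + g(x)e^{-(n-1)x})\,dx$, so the $f$-version of this positive term is $\leq$ the $g$-version, i.e. the difference (f-term minus g-term) is $\leq 0$. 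Putting it together at $r = r_1$: $w'(r_1) + 0 + (\text{something} \leq 0) = h_1 - h_2 \geq 0$, giving $w'(r_1) \geq -(\text{something} \leq 0) \geq 0$. Combined with $w'(r_1) \leq 0$ from the minimality, we get $w'(r_1) = 0$.

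From $w(r_1) = w'(r_1) = 0$ this does not immediately close the argument, so I would instead run the standard Grönwall-type comparison on $[r_0, r_1]$ more carefully, or use a strict-inequality perturbation trick: replace $f$ by $f_\varepsilon$ solving \eqref{Gode1} with $h_1$ replaced by $h_1 + \varepsilon$ and $f_\varepsilon(r_0) = f(r_0) + \varepsilon$, prove $f_\varepsilon > g$ on the whole interval of existence (now the inequality at the would-be first crossing point is strict, yielding an immediate contradiction since then $w_\varepsilon'(r_1) > 0$ forces $w_\varepsilon$ to have been increasing through zero, impossible at an infimum of its negative set), and then let $\varepsilon \to 0$ using continuous dependence on parameters. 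The role of $m(r) \geq 0$ and $A \geq 0$ is to ensure the relevant monotonicity: on the region where $w \geq 0$, the "feedback" terms $m(r)f^2$ and the exponential term both push in the direction that preserves $f \geq g$ (the quadratic term because $m \geq 0$ makes $r \mapsto m(r) r^2$ increasing for the relevant sign considerations once combined with control on $f+g$, and the exponential because larger $f$ on $[2,r]$ makes the subtracted term smaller).

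The main obstacle I anticipate is the non-local (integral) dependence of the exponential term on the past history of $f$ versus $g$: this is not a standard ODE comparison since the equation is really an integro-differential equation. The resolution is exactly the observation above — because the integral $\int_2^r f$ enters only through $\exp(-\int \cdots)$ with a \emph{minus} sign and $f$ appears with a positive coefficient $e^{-(n-1)x} > 0$, the monotone dependence goes the "right way": on the set where $f \geq g$ has held so far, the $f$-term is pointwise $\leq$ the $g$-term, which is precisely what a comparison argument needs. So one must be careful to set up the first-crossing argument so that at $r_1$ we genuinely know $f \geq g$ on all of $[2, r_1]$ (not just $[r_0,r_1]$), which is why the hypothesis $f \geq g$ on $[2, r_0]$ is included. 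Once that structural point is in place, the rest is the routine first-crossing/$\varepsilon$-perturbation argument, and I would keep the write-up short, citing continuous dependence of ODE solutions on initial data and parameters rather than reproving it.
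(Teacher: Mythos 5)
Your proposal is correct and follows essentially the same route as the paper: a first-crossing argument in which, at a touching point $r_1$, the quadratic terms cancel (since $f(r_1)=g(r_1)$) and the non-local exponential term compares the right way because $f\geq g$ on all of $[2,r_1]$ and $A\geq 0$, yielding $f'(r_1)\geq g'(r_1)$. Your $\varepsilon$-perturbation step is a genuine improvement in rigor: the paper's own proof stops at $f'(r_1)\geq g'(r_1)$ and simply asserts the conclusion, which is inconclusive in the degenerate case $f'(r_1)=g'(r_1)$, exactly the gap your perturb-then-pass-to-the-limit argument closes (and, as you half-observe, the hypothesis $m(r)\geq 0$ is never actually needed, since the quadratic terms cancel identically at the crossing point in both versions of the argument).
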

\begin{proof}
Suppose $f(r)\geq g(r)$ for $r_0<r<r_1$ and  $f(r_1)=g(r_1)$.
Since $f(r)\geq g(r)$ for $2<r\leq r_0$, one has
\begin{equation*}
\frac{Ae^{(n-1)r_1}}{\exp(\int_2^{r_1}2(1 +f(x)e^{-(n-1)x})dx)}\leq \frac{Ae^{(n-1)r_1}}{\exp(\int_2^{r_1}2(1+g(x)e^{-(n-1)x})dx)}.
\end{equation*}
By \eqref{Gode1}, \eqref{Gode2} and $h_1(r_1)\geq h_2(r_1)$, we have
\begin{equation*}
  f'(r_1)\geq g'(r_1).
\end{equation*}
It implies the lemma.
\end{proof}

\section{Proof of Theorem \ref{Thmmanifold}}

We plan to use rotationally symmetric metric to complete our construction.
Our objective is to construct proper $f_1(r)$ so that Riemannian manifold $$(M_n, g)
     = \Bigl( \R^+ \times S^{n-1}(1)\cup \{O\}
     , dr^2 + f_1^{\,2}(r)g_{S^{n-1}(1)} \Bigr)$$ satisfies Theorem \ref{Thmmanifold}.
In the neighbourhood of the origin, we will use the Euclidean metric. For   $r\geq 2$.
  we will construct  $f(r)$  so that
  \begin{equation}\label{Gf11}
    f_1(r)=\exp(\int_2^r (\sqrt{|K_0|}+f(x))dx),
  \end{equation}
will have the desired properties.

     Define
     \begin{align}
     K_{\rm rad}(r)   := & -\frac{f_1''(r)}{f_1(r)},\label{C}\\
     q(r) := & \frac{(n-1)(n-3)}{4}\left(\frac{f_1'(r)}{f_1(r)}\right)^2 - \frac{(n-1)}{2}K(r)+\frac{\lambda_i}{f_1^2}\label{D}.
\end{align}
Direct computation yields that for $r\geq2$
\begin{align}
     K_{\rm rad}(r)
     = &-  |K_0| -2\sqrt{|K_0|}f(r)-f^2(r)-f^{\prime}(r),\label{DeKr}\\
     q(r)
     = & \frac{(n-1)^2}{4}(\sqrt{|K_0|}+f(r))^2+\frac{n-1}{2}f^{\prime}(r)+\frac{\lambda_i}{f_1^2}.\label{Deq_0r}
\end{align}
Let
\begin{equation}\label{Gtau}
  \tau=\frac{(n-1)}{2}\sqrt{|K_0|}.
\end{equation}
In order to prove  Theorem \ref{Thmmanifold},   we need to show that there exists
  $f$ such that
 \begin{equation}\label{Solf}
    q(r)=V(r)
 \end{equation}
 and
 $K_{\rm rad}(r)$ given by \eqref{DeKr} satisfies our  goal, where $V(r)$  is given by Theorem \ref{thmke}.

Without loss of generality, we assume $K_0=-1$.

\begin{proof}[\bf Proof of Theorem \ref{Thmmanifold}]

For $r\leq 1$, let
  \begin{equation}\label{Gf10}
    \tilde{f}_1(r)= r.
  \end{equation}
   For $r\in[2,b]$, let
  \begin{equation}\label{Gf10}
    \tilde{f}_1(r)= e^{2(r-2)}.
  \end{equation}
We extend $ \tilde{f}_1(r)$ to $(0,b)$  so that $ \tilde{f}_1(r)>0$  and $ \tilde{f}_1\in C^{\infty}(0,b]$.
Let $\tilde{V}(r)$ be given by \eqref{D} for $r\leq b$, namely,
\begin{equation*}
\tilde{V}(r)=\frac{(n-1)(n-3)}{4}\left(\frac{\tilde{f}_1^{\prime}}{\tilde{f}_1} \right)^2+\frac{(n-1)}{2} \frac{\tilde{f}_1^{\prime\prime}}{\tilde{f}_1}+\frac{\lambda_i}{\tilde{f}_1^2}.
\end{equation*}
In particular,
for $0<r\leq 1$,
\begin{equation*}
\tilde{V}(r)=\left(\frac{(n-1)(n-3)}{4}+\lambda_i\right)\frac{1}{r^2}=\frac{\nu^2-\frac{1}{4}}{r^2}.
\end{equation*}

By Theorem \ref{thmke}, we obtain a potential $V(r)$. Now we are ready to define our metric.
For $ r\leq b-\delta$, let
\begin{equation*}
  f_1(r)=\tilde{f}_1(r).
\end{equation*}
For $2\leq r\leq b-\delta$, let
  $f(r)=1$ so that
for  $ 2\leq r\leq b-\delta$,
\begin{equation*}
  f_1(r)=\exp(\int_2^r (1+f(x))dx).
\end{equation*}
Let us consider the following equation ($n\geq 2$)
\begin{equation}\label{Solf1new}
    \tau^2+\frac{ (n-1)^2}{2}f+ \frac{(n-1)^2}{4}f^2+\frac{n-1}{2}f^{\prime}+\frac{\lambda_i}{\exp(\int_2^r 2(1+f(x))dx)}=V(r).
\end{equation}
Since  $V$ is defined on $(0,\infty)$ and $f\equiv 1$   on $(2,b-\delta)$, let $f(r)$ solve \eqref{Solf1new} with initial condition $f(b-\delta)=1$.  By choosing $\delta$ sufficiently small and III of Theorem \ref{thmke}, there is a unique solution $f(r)$  for $b-\delta\leq r\leq b+\delta$ such that $1/2<f(r)<2$ for $b-\delta\leq r\leq b+\delta$.
Let $f(r)$ solve the   equation \eqref{Solf1new}
for $r\geq b+\delta$. We claim (see Lemma \ref{Lesolh} below) that there exists a unique solution $f(r)$ for all $r\geq b+\delta$ such that
\begin{equation}\label{Anew}
  |f(r)|+|f'(r)|= O\left( \frac{h(r)}{1+r}\right).
\end{equation}
For $r>2$, define
\begin{equation*}
  f_1(r)=\exp(\int_2^r (1+f(x))dx).
\end{equation*}
By our construction,  for $r>0$,
\begin{equation}\label{E}
 V(r)= \frac{(n-1)(n-3)}{4}\left(\frac{f_1^{\prime}}{f_1}\right)^2+\frac{n-1}{2}\frac{f_1^{\prime\prime}}{f_1}+\frac{\lambda_i}{f_1^2}.
\end{equation}
By \eqref{DeKr} and \eqref{Anew}, one has
\begin{equation*}
    |K_{\rm rad}(r)+1|=  O\left( \frac{h(r)}{1+r}\right).
\end{equation*}
By Theorem \ref{thmke}, $-D^2+V$ has non-empty singular continuous
spectrum. By \eqref{Gp} and  \eqref{E},  we have that $ -\Delta$ also has non-empty singular continuous spectrum.
By \eqref{Gf11}, one has
\begin{equation*}
    \lim_{r\to\infty} \Delta r = \lim_{r\to\infty}(n-1)\frac{f_1'(r)}{f_1(r)}= (n-1)+\lim_{r\to\infty}(n-1)f(r)=(n-1) .
\end{equation*}
By Theorem 1.2 in \cite{kumura1997essential},
we have
$$\sigma_{{\rm ess}}(-\Delta)= \left[\frac{(n-1)^2}{4},\infty  \right).$$
Thus
\begin{equation*}
    \sigma_{{\rm ac}}(-\Delta)\subset \left[\frac{(n-1)^2}{4},\infty  \right).
\end{equation*}
By the fact that  for $r>b+\delta$
\begin{equation*}
   \left| V(r)-\frac{(n-1)^2}{4}\right|\leq \frac{h(r)}{1+r},
\end{equation*}
one has  (e.g. \cite{kis1,christ1998absolutely})
\begin{equation*}
    \left[\frac{(n-1)^2}{4},\infty  \right)\subset \sigma_{{\rm ac}}(-\Delta).
\end{equation*}
Thus
\begin{equation*}
    \sigma_{{\rm ac}}(-\Delta)= \left[\frac{(n-1)^2}{4},\infty  \right).
\end{equation*}
which completes the proof.

\end{proof}

\begin{lemma}\label{Lesolh}
Suppose  for $ 2\leq r\leq b-\delta$,  $ f(r)=1$ and for  $ b-\delta\leq r\leq b+\delta$, $\frac{1}{2}\leq f(r)\leq 2$.
Suppose $|V(r)-\tau^2|\leq \frac{h(r)}{1+r}$ for $r\geq b+\delta$.
Let $f(r)$ solve the following equation ($n\geq 2$)
\begin{equation}\label{Solf1}
    \tau^2+\frac{ (n-1)^2}{2}f+ \frac{(n-1)^2}{4}f^2+\frac{n-1}{2}f^{\prime}+\frac{\lambda_i}{\exp(\int_2^r 2(1+f(x))dx)}=V(r),
\end{equation}
for $r\geq b+\delta$. Then there exists a unique solution $f(r)$ for all $r\geq b+\delta$ such that
\begin{equation}\label{lasth}
  |f(r)|=  O\left( \frac{h(r)}{1+r}\right),
\end{equation}
and
\begin{equation}\label{A}
  |f'(r)|= O\left( \frac{h(r)}{1+r}\right).
\end{equation}
\end{lemma}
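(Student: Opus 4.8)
The plan is to treat \eqref{Solf1} as a scalar Riccati-type ODE for $f$ and run a bootstrap/continuity argument, showing that as long as a solution exists it stays in the ``small'' regime $|f(r)| \le C h(r)/(1+r)$, and that this a priori bound in turn prevents blow-up, so the solution extends to all $r \ge b+\delta$. First I would rewrite \eqref{Solf1} in the form
\begin{equation*}
  \frac{n-1}{2}f' = -\frac{(n-1)^2}{2}f - \frac{(n-1)^2}{4}f^2 + \left(V(r)-\tau^2\right) - \frac{\lambda_i}{\exp\!\left(\int_2^r 2(1+f(x))\,dx\right)},
\end{equation*}
i.e. $f' = -(n-1)f - \tfrac{n-1}{2}f^2 + \tfrac{2}{n-1}(V(r)-\tau^2) - \tfrac{2}{n-1}\lambda_i e^{-2(r-2)}\exp\!\left(-2\int_2^r f\right)$. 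The linear part has the \emph{negative} coefficient $-(n-1)$, so the homogeneous flow is contracting; this is the structural reason the estimate can be closed. The two inhomogeneous terms are both controllably small on $[b+\delta,\infty)$: by hypothesis $|V(r)-\tau^2| \le h(r)/(1+r)$, and the last term decays like $e^{-2(r-2)}$ times a factor that, once we know $|f|$ is bounded, is between two exponentials of bounded integrals — in particular $O(e^{-2r})$, which is far smaller than $h(r)/(1+r)$ for $b$ large. (This is exactly the role of Lemma \ref{lecom1}, the comparison theorem, which lets us sandwich the nonlinear term $\frac{Ae^{(n-1)r}}{\exp(\int_2^r 2(1+f e^{-(n-1)x})dx)}$ between the values it takes for super/sub-solutions; here with $m \equiv 0$ or absorbed appropriately.)

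The core estimate I would prove is: there is $C = C(n,\lambda_i)$ and $b_0$ such that for $b \ge b_0$, any solution of \eqref{Solf1} with $f(b+\delta) \in [\tfrac12,2]$ satisfies $|f(r)| \le C\,\frac{h(r)}{1+r}$ for all $r$ in its interval of existence $\cap\, [b+\delta,\infty)$. Wait — the initial value $\tfrac12 \le f(b+\delta) \le 2$ is $O(1)$, not $O(h(b)/(1+b))$, so strictly the bound \eqref{lasth} is an \emph{asymptotic} ($O$) statement and near $r = b+\delta$ one just needs boundedness. The honest way to organize this: on a first stretch $[b+\delta, r_1]$ the contracting linear term drives $f$ down from its $O(1)$ initial value; using $f' \le -(n-1)f + (\text{small})$ and Grönwall, $f(r) \le 2e^{-(n-1)(r-b-\delta)} + (\text{small})$, so after $O(\log b)$ units of $r$ we have $f = O(h(r)/(1+r))$, and one checks the small-decaying terms never ruin this. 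Then I would set up the continuity (bootstrap) argument proper: let $C$ be a large constant to be fixed, let $I$ be the set of $r \ge r_1$ up to which $|f| \le C h(r)/(1+r)$ holds; on $I$, feed this bound back into the ODE. The term $\tfrac{2}{n-1}(V-\tau^2)$ contributes $\le \tfrac{2}{n-1}\cdot\frac{h(r)}{1+r}$; the exponential term is negligible; the quadratic term is $O((h(r)/(1+r))^2) = o(h(r)/(1+r))$ since $h(r) \le 1+r^{1/10}$ forces $h(r)/(1+r) \to 0$. So, using the integral form $f(r) = e^{-(n-1)(r-r_1)}f(r_1) + \int_{r_1}^r e^{-(n-1)(r-s)}\big(\text{small}(s)\big)\,ds$ and the elementary fact $\int_{r_1}^r e^{-(n-1)(r-s)}\frac{h(s)}{1+s}\,ds \le \frac{2}{n-1}\cdot\frac{h(r)}{1+r}$ for $r$ large (valid because $h(s)/(1+s)$ is ``slowly varying'': $h$ nondecreasing, $h(r)\le 1+r^{1/10}$, so it doesn't change much over an $O(1)$ window — this is where $h(r)\le 1+r^{1/10}$ is used), one recovers $|f(r)| \le \tfrac{C}{2}\cdot\frac{h(r)}{1+r}$ on $I$, strictly better than the definition of $I$. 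Hence $I$ is open and closed in $[r_1,\infty)$, so $I = [r_1,\infty)$: the solution is global and satisfies \eqref{lasth}.

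For \eqref{A}, the derivative bound, I would simply read it off \eqref{Solf1} once \eqref{lasth} is established: $f'$ is an explicit algebraic combination of $f$, $f^2$, $V-\tau^2$, and the exponentially small term, each of which is $O(h(r)/(1+r))$ (the $f^2$ term is even better). No new argument is needed. Uniqueness is standard ODE theory (the right-hand side is locally Lipschitz in $f$ — note $\int_2^r 2(1+f)$ depends on the past of $f$, so strictly \eqref{Solf1} is an integro-differential equation, but differentiating, or treating $(f, \int_2^r f)$ as a $2$-dimensional system, restores a genuine ODE with smooth right-hand side), so uniqueness propagates from $f(b+\delta) = 1$ and the comparison lemma pins down that no other branch can satisfy the smallness.

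\textbf{Main obstacle.} The delicate point is making the self-improving inequality actually self-improve rather than merely reproduce itself — i.e. getting the constant in $\int_{r_1}^r e^{-(n-1)(r-s)}\frac{h(s)}{1+s}\,ds \le (\text{const} < C)\cdot \frac{h(r)}{1+r}$ to be genuinely smaller than $C$, uniformly. This forces a careful choice of the order of quantifiers: first fix $b$ (equivalently $r_1$) large enough that (a) the exponential term $\lambda_i e^{-2(r-2)}$ is absolutely negligible compared to $h(r)/(1+r)$ on $[r_1,\infty)$, and (b) the ``slowly varying'' comparison $h(s)/(1+s) \le 2\,h(r)/(1+r)$ holds for $s \in [r-1,r]$; only then pick $C$ large relative to the resulting fixed constants. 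The other place requiring care is the transition region $[b-\delta, b+\delta]$ and the matching at $r = b+\delta$: one must verify, using III of Theorem \ref{thmke} (the bound $\sup_{|r-b|\le\delta}|V(r)| \le \tau^2 + 1 + \sup_{b-1\le r\le b}\tilde V(r)$) and $\delta$ small, that the solution launched from $f(b-\delta)=1$ indeed stays in $[\tfrac12,2]$ across the window — this is a short-time estimate but needs $\delta = \delta(b)$ chosen after $b$, which is consistent with the order of construction in the proof of Theorem \ref{Thmmanifold}.
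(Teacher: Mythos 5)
Your proposal is correct in substance but takes a genuinely different route from the paper. You work directly with the Riccati equation for $f$, exploit the contracting linear term $-(n-1)f$ through a Duhamel representation, and close a continuity/bootstrap argument using the convolution estimate $\int e^{-(n-1)(r-s)}\tfrac{h(s)}{1+s}\,ds \lesssim \tfrac{h(r)}{1+r}$ (your ``slow variation'' step). The paper instead absorbs the linear term by the integrating-factor substitution $f(r)=t(r)e^{-(n-1)r}$, after which the quadratic term $\tfrac{n-1}{2}t^2e^{-(n-1)r}$ and the $\lambda_i$-term are \emph{nonnegative}, so the upper bound $t(r)\le t_0+\int|g|$ follows by simply dropping them and integrating; the lower bound is obtained from the comparison Lemma \ref{lecom1} with the explicit subsolution $\hat t=-10\int_{r_0}^r\tfrac{h(x)}{1+x}e^{(n-1)x}dx$, and both steps rest on the same kernel estimate you use (the paper's \eqref{last5} is exactly your convolution bound written in the $t$-variable). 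What the paper's route buys is that no self-improving constant and no order-of-quantifiers bookkeeping are needed: sign-definiteness replaces the bootstrap on one side, and the sub-solution replaces it on the other. What your route buys is robustness and standardness (Gr\"onwall plus an invariant-region argument), at the cost of the care you yourself flag. One point you gloss that deserves explicit treatment in your scheme: the dangerous direction is downward Riccati escape, since $-\tfrac{n-1}{2}f^2$ has the unfavorable sign when $f<0$; you need a short invariant-region argument (at $f\in[-\tfrac12,0]$ the drift $-(n-1)f(1+\tfrac f2)$ is positive and dominates the small forcing and the $\lambda_i e^{-2\int_2^r(1+f)}$ term) to justify that $f$ never leaves a small neighborhood of $0$ on the initial stretch before your bootstrap set $I$ is even nonempty — this is precisely the role the paper's comparison-lemma subsolution plays. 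Also note that since \eqref{lasth}--\eqref{A} are $O(\cdot)$ statements with constants allowed to depend on $b$, $n$, $\lambda_i$, your worry about fixing $b$ before $C$ can be relaxed: it suffices to get the clean constant for $r\ge r_1(C)$ and use boundedness on the compact piece $[b+\delta,r_1]$. Your derivation of \eqref{A} from the equation once \eqref{lasth} is known coincides with the paper's, and your reduction of the integro-differential equation to a two-dimensional ODE for the uniqueness claim is a fine (and slightly more explicit) substitute for the standard-ODE remark the paper leaves implicit.
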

\begin{proof}
Let $r_0=b+\delta$.
Let
\begin{equation}\label{deft}
    f(r)=t(r)e^{-(n-1)r},
\end{equation}
and $t(r_0)=f(r_0)e^{(n-1)r_0}=t_0$.
Then for $r\geq r_0$,
$t(r)$ satisfies equation
\begin{equation}\label{last2}
    t^{\prime}+\frac{n-1}{2}t^2e^{-(n-1)r}+\frac{2\lambda_i}{n-1}\frac{e^{(n-1)r}}{\exp(\int_2^r2(1+t(x)e^{-(n-1)x})dx)}= g(r),
\end{equation}
where $g(r)=\frac{2}{n-1}({V}(r)-\tau^2)e^{(n-1)r}$. By the assumption, one has
\begin{equation}\label{B}
    |g(r)|\leq \frac{2}{n-1}\frac{h(r)}{1+r} e^{(n-1)r}.
\end{equation}
By a simple computation, one has for large $r_0$,
\begin{eqnarray}
 \nonumber \int_{r_0}^r \frac{h(x)}{1+x} e^{(n-1)x} dx&\leq & \frac{h(r)}{1+r}e^{(n-1)r}\int_{r_0}^r \frac{1+r}{1+x} e^{(n-1)(x-r)} dx\\
\label{last5}    &\leq & \frac{3h(r)}{1+r}e^{(n-1)r} .
\end{eqnarray}
By \eqref{last2},
 one has
\begin{equation}\label{last3}
   t(r)\leq t_0+\int_{r_0}^r |g(x)|dx\leq t_0+\frac{6h(r)}{1+r}e^{(n-1)r}.
\end{equation}
Now we will use Lemma \ref{lecom1} to get the lower bound of $t(r)$.
Let
\begin{equation*}
\hat{t}=- 10\int_{r_0}^r\frac{h(x)}{1+x} e^{(n-1)x} dx.
\end{equation*}
Let  $\hat{t}(r)=0$ for $2<r<r_0$.
Direct computation yields that
\begin{equation}\label{last2hat}
    \hat{t}^{\prime}+\frac{n-1}{2}\hat{t}^2e^{-(n-1)r}+\frac{2\lambda_i}{n-1}\frac{e^{(n-1)r}}{\exp(\int_2^r2(1+\hat{t}(x)e^{-(n-1)x})dx)}= \hat{g}(r),
\end{equation}
where
\begin{eqnarray*}
  \hat{g}(r) &=& -10\frac{h(r)}{1+r} e^{(n-1)r}+O(1)\frac{e^{(n-1)r}}{\exp(\int_2^r2(1+\hat{t}(x)e^{-(n-1)x})dx)} \\
   &&-O(1)e^{-(n-1)r}\left(\int_{r_0}^r\frac{h(x)}{1+x} e^{(n-1)x} dx\right)^2\\
  &\leq& -10\frac{h(r)}{1+r} e^{(n-1)r}+O(1)\frac{e^{(n-1)r}}{ e^{2r}}-O(1)e^{(n-1)r}\frac{h^2(r)}{(1+r)^2}  \\
     &\leq &  -10\frac{h(r)}{1+r} e^{(n-1)r}+O(1)e^{(n-3)r}-O(1)e^{(n-1)r}\frac{h^2(r)}{(1+r)^2} ,
\end{eqnarray*}
where the first  inequality holds  by \eqref{last5}. 

By  \eqref{B} and choosing large $r_0$, one has
$\hat{g}(r)\leq g(r)$ for  $r\geq r_0$.
 By  Lemma \ref{lecom1} and \eqref{last2}, one has
\begin{equation}\label{last7}
    t(r)\geq \hat{t}(r)\geq - 10\frac{h(r)}{1+r}e^{(n-1)r}.
\end{equation}
By \eqref{last3} and \eqref{last7}, we obtain that
\begin{equation}\label{Gjuly111}
   | t(r)|\leq t(r_0)+10\frac{h(r)}{1+r}e^{(n-1)r}.
\end{equation}
It implies \eqref{lasth}.  \eqref{B} follows from  \eqref{Solf1} and \eqref{lasth}.
\end{proof}
 \section*{Acknowledgments}
    This research was
 supported by NSF DMS-1401204, DMS-1901462,  and  DMS-1700314.


\footnotesize

\end{document}